\newtheorem{theorem}{Theorem}[section]
\newtheorem{proposition}[theorem]{Proposition}
\newtheorem{example}[theorem]{Example}
\newtheorem{remark}[theorem]{Remark}
\begin{document}

\title[Separately continuous functions and its dependence on $\aleph$ coordinates]{Separately continuous functions on products and its dependence on $\aleph$ coordinates}

\author{V.V.Mykhaylyuk}
\address{Department of Mathematics\\
Chernivtsi National University\\ str. Kotsjubyn'skogo 2,
Chernivtsi, 58012 Ukraine}
\email{vmykhaylyuk@ukr.net}

\subjclass[2000]{Primary 54B10, 54C30, 54E52}


\commby{Ronald A. Fintushel}


\keywords{separately continuous functions, dependence functions on $\aleph$ coordinates, pseudo-$\aleph$-compact, Baire space}

\begin{abstract}
It is investigated necessary and sufficient conditions on topological spaces $X=\prod\limits _{s\in S}X_s$ and $Y=\prod\limits _{t\in T}Y_t$ for the dependence of every separately continuous functions $f:X\times Y\to \mathbb R$ on at most $\aleph$ coordinates with respect to the first or the second variable.
\end{abstract}

\maketitle
\section{Introduction}

Let $X=\prod\limits _{s\in S}X_s$ be the product of a family of sets $X_s$, $Z$ be a set and $T\subseteq S$. We say that {\it a mapping $f:X\to Z$ concentrated on $T$}, if $f(x')=f(x'')$ for $x',x''\in X$ with $x'_{|_T}=x''_{|_T}$. Moreover, if $|T|\le\aleph$, then we say that  {\it $f$ depends on at most $\aleph$ coordinates}. Let $Y$ be a set. We say that {\it a mapping $g:X\times Y \to Z$ concentrated on $T$ with respect to the first variable}, if the mapping $\varphi :X\to Z^Y$, $\varphi (x)(y)=g(x,y)$, concentrated on $T$. Moreover, if $|T|\le\aleph$, then we say that {\it $g$ depends at most on $\aleph$ coordinates with respect to the first variable}. The notion of dependence of $g:X\times Y \to Z$ at most on $\aleph$ coordinates with respect to the second variable can be introduced analogously. Further $\aleph$ means an infinite cardinal. For an abridgment we shall use the term "depends on $\aleph$ coordinates" instead the term "depends on at most $\aleph$ coordinates".

Let $X$ be a topological space and $\aleph$ be an infinite cardinal. We say that a family $\alpha=(A_i:i\in I)$ of sets $A_i\subseteq X$ is {\it locally finite}, if for every $x\in X$ there exists a neighborhood $U$ of $x$ such that the set $\{i\in I:A_i\cap U\ne \O \}$ is finite, {\it poinwise finite} ($\aleph$-{\it pointwise}), if for every $x\in X$ the set $\{i\in I: x\in A_i\}$ is finite (has the cardinality $\leq\aleph$). Moreover, for arbitrary family $\alpha=(A_i:i\in I)$ the cardinality of $I$ we shall call by the cardinality of the family $\alpha$.

The following properties of a topological space $X$ will be useful for our investigation:

(I$_\aleph$) every locally finite family of open nonempty subsets of $X$ has the cardinality $\leq\aleph$;

(II$_\aleph$) every poinwise finite family of open nonempty subsets of $X$ has the cardinality $\leq\aleph$;

(III$_\aleph$) every $\aleph$-pointwise family of open nonempty subsets of $X$ has the cardinality $\leq\aleph$.

Note that for a topological space the condition (I$_\aleph$) coincides with {\it pseudo-$\aleph^+$-compactness}, where $\aleph^+$ is the next after $\aleph$ cardinal. It was developed in [1] a technic of investigation of properties of topological products which based on Shanin's Lemma [2, p.185]. Using this technic it was proved in [3] (Proposition 1) that topological product has (I$_\aleph$), (II$_\aleph$) or (III$_\aleph$) if each its finite subproduct has the same property. Clearly that (III$_\aleph$) $\Longrightarrow$ (II$_\aleph$) $\Longrightarrow$ (I$_\aleph$).

A topological space $X$ is called {\it $\aleph$-compact} if every open covering ${\mathcal U}$ of $X$ with $|{\mathcal U}|\leq \aleph$ has some finite subcover. In particular, $\aleph_0$-compact space is called {\it countably compact space.}

The dependence of continuous functions on products on $\aleph$ coordinates was investigated in papers of many mathematicians $20$-th century (see [2, p.187]). N.~Noble and M.~Ulmer in [1] obtained a most general result in this direction. They established a close relation between the dependence and pseudo-$\aleph$-compactness. In particular, the following proposition follows from theirs results.

\begin{theorem}\label{th:1.1}[Noble, Ulmer]
Let $X=\prod\limits _{s\in S}X_s$ be a product of a family of nontrivial ($|X_s|>1$) completely regular spaces $X_s$ besides $|S|>\aleph_i$ where $\aleph_i$ is an infinite cardinal. Then every continuous function $f:X\to \mathbb R$ depends on $\aleph_i$ coordinates if and only if $X$ is pseudo-$\aleph_{i+1}$-compact where $\aleph_{i+1}$ is the next after $\aleph_i$ cardinal.
\end{theorem}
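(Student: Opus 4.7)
My plan is to prove the two implications separately, both driven by a $\Delta$-system (Shanin) argument applied to the finite supports of basic open boxes in the product.

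For sufficiency, i.e., pseudo-$\aleph_{i+1}$-compactness $\Longrightarrow$ dependence on $\aleph_i$ coordinates, I would argue by contradiction. Suppose $f\colon X\to\mathbb R$ is continuous with $|D(f)|>\aleph_i$. For each $s\in D(f)$ I will fix witnesses $a^s,b^s\in X$ differing only in the $s$-th coordinate with $f(a^s)\neq f(b^s)$, and partition $D(f)=\bigcup_n D_n$ according to the jump size $|f(a^s)-f(b^s)|\geq 1/n$; some $D_n$ then has cardinality $>\aleph_i$. Using continuity together with complete regularity, I then choose basic open boxes $U^s\ni a^s$ and $V^s\ni b^s$ on a common finite support $F_s\ni s$, agreeing off coordinate $s$ and disjoint in coordinate $s$, on which $f$ is separated by at least $1/(2n)$. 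I would apply the $\Delta$-system lemma to the $F_s$'s to extract a subfamily of cardinality $>\aleph_i$ with a common root $R$, and after discarding the at most $|R|$ indices for which $s\in R$ I can assume $s\in F_s\setminus R$ for every remaining $s$, making the sets $F_s\setminus R$ pairwise disjoint. Then, using that property $(\mathrm{I}_{\aleph_i})$ is inherited by the finite subproduct $X_R$, I would further pigeonhole on the root-projections $\pi_R(U^s)\subseteq X_R$ and extract a locally finite subfamily of $(U^s)$ of cardinality $>\aleph_i$, contradicting pseudo-$\aleph_{i+1}$-compactness of $X$.

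For the converse I would argue contrapositively. Assuming $X$ fails pseudo-$\aleph_{i+1}$-compactness, fix a locally finite family $(G_\alpha)_{\alpha<\aleph_{i+1}}$ of nonempty open sets, and refine each to a basic open box $B_\alpha\subseteq G_\alpha$ with finite support $F_\alpha$; the refined family is still locally finite. The $\Delta$-system lemma produces a subfamily of size $\aleph_{i+1}$ with root $R$. In the generic case $|F_\alpha\setminus R|\geq 1$ for $\aleph_{i+1}$-many $\alpha$, I would pick $s_\alpha\in F_\alpha\setminus R$; in the degenerate case $F_\alpha=R$ for all $\alpha$, I would exploit the hypothesis $|S|>\aleph_i$ (so $|S\setminus R|\geq\aleph_{i+1}$) to select distinct fresh $s_\alpha\in S\setminus R$ and shrink $B_\alpha$ at the new coordinate, which is possible by nontriviality and complete regularity of $X_{s_\alpha}$. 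In either case I would use complete regularity on each relevant factor to build a continuous bump $f_\alpha\colon X\to[0,1]$ supported in $B_\alpha\subseteq G_\alpha$, depending only on $F_\alpha\cup\{s_\alpha\}$, and non-constant along the coordinate $s_\alpha$. Local finiteness of the supports makes $f:=\sum_\alpha f_\alpha$ a continuous function, and by construction $s_\alpha\notin F_\beta\cup\{s_\beta\}$ for $\beta\neq\alpha$, so varying only $x_{s_\alpha}$ changes only $f_\alpha$; thus every $s_\alpha$ lies in $D(f)$, giving $|D(f)|\geq\aleph_{i+1}>\aleph_i$.

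The hard part will be the sufficiency direction, specifically the final extraction of a locally finite subfamily from $(U^s)$. After the $\Delta$-system refinement the boxes still impose \emph{varying} constraints on the common root $R$, so the naive family $(U^s)$ need not itself be locally finite, and genuine care is required to thin it. This is precisely where the inheritance of $(\mathrm{I}_{\aleph_i})$---and more likely its stronger siblings $(\mathrm{II}_{\aleph_i})$ and $(\mathrm{III}_{\aleph_i})$ at the finite-dimensional level, accessed through the technique of Proposition~1 of [3]---becomes indispensable. Getting this combinatorial step right, while also ensuring that the final family of cardinality $>\aleph_i$ still has all coordinates outside $R$ acting pairwise disjointly, is the most delicate point of the argument.
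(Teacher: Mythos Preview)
The paper does not prove Theorem~\ref{th:1.1}; it is quoted in the Introduction as a result of Noble and Ulmer~[1], so there is no proof in the paper against which to compare your proposal.

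On the mathematics itself: your necessity argument is sound. For sufficiency, however, the $\Delta$-system step and the subsequent ``pigeonholing on root-projections'' are unnecessary, and the latter is precisely the step you flag as unresolved. A cleaner route avoids the $\Delta$-system entirely. Once you have, for each $s\in D_n$, basic boxes $U^s\ni a^s$ and $V^s\ni b^s$ with a common finite support $F_s$, agreeing on $F_s\setminus\{s\}$, and with $f(U^s)$ and $f(V^s)$ separated by $1/(2n)$, the family $(U^s)_{s\in D_n}$ is already locally finite in $X$. Indeed, given $x\in X$, continuity of $f$ yields a basic neighbourhood $W\ni x$ with finite support $G$ and $\mathrm{osc}(f,W)<1/(2n)$. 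If $s\notin G$ and $W\cap U^s\neq\emptyset$, then changing a witness point only at coordinate $s$ (which $W$ does not constrain) produces a point of $W\cap V^s$, forcing $\mathrm{osc}(f,W)\ge 1/(2n)$, a contradiction. Hence $\{s\in D_n:W\cap U^s\neq\emptyset\}\subseteq G$ is finite, and property~(I$_{\aleph_i}$) gives $|D_n|\le\aleph_i$ directly. The ``hard part'' you anticipate simply does not arise; the $\Delta$-system machinery (as in the paper's Theorem~\ref{th:2.4}) is needed only for the genuinely harder separately continuous case.
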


It is natural to study conditions of dependence for mappings which satisfy conditions weaker than continuity. In particular, it is actual for separately continuous mappings that is mappings of many variables continuous with respect to each variable. On the other hand, it was detected in [4] that the dependence on countable coordinates of separately continuous functions of two variables closely connected with the investigation of discontinuity points set of separately continuous functions of two variables each of which is the product of metrizable compacts.

Conditions of the dependence on $\aleph$ coordinates of separately continuous functions were studied in [3]. It was obtained in [3] that conditions of dependence are closely connected with the conditions (²)$_\aleph$, (²²)$_\aleph$ and (²²²)$_\aleph$ similarly to continuous functions. It was shown in [3] (Theorem 2) that if every separately continuous function $f:X\times Y \to \mathbb R$, where $X$ is completely regular space, $Y=\prod\limits _{t\in T}Y_t$ is a product of nontrivial completely regular spaces $Y_t$ and $|T|>\aleph$, depends on $\aleph$ coordinates with respect to second variable, then $X\times Y$ has (I$_\aleph$) and at least one of $X$ and $Y$ has (II$_\aleph$). Conversely (Theorems 3,4,5), every separately continuous function $f:X\times Y \to \mathbb R$, where $X$ and $Y$ are the same as in Theorem 2 but not necessarily completely regular, depends on $\aleph$ coordinates with respect to second variable if at least one of the following conditions holds:

$(i)$\quad $X$ has (III$_\aleph$) and $Y$ has (I$_\aleph$);

$(ii)$\quad $X$ has (II$_\aleph$) and $Y$ is a pseudo-${\aleph}_0$-compact;

$(iii)$\quad $X$ is countably compact space and $Y$ has (II$_\aleph$).

In the case of countable compact completely regular spaces $X_s$ and $Y_t$ every separately continuous function $f:X\times Y \to \mathbb R$ on the product of spaces $X=\prod\limits_{s\in S}X_s$ and $Y=\prod\limits _{t\in T}Y_t$, where $|T|>\aleph$, depends on $\aleph$ coordinates with respect to the second variable (or depends on $\aleph$ coordinates as a function on the product $\prod\limits _{s\in S}X_s\times \prod\limits _{t\in T}Y_t$) if and only if $X$ or $Y$ has (II$_\aleph$) (Theorem 6).

Among these results attention is drawn to the following fact. We impose symmetrical conditions on $X$ and $Y$ although we study the dependence with respect to the second variable. This follows from the symmetry of the necessary condition of the dependence (Theorem 2 from [3]) and from the symmetry of the sufficient conditions $(ii)$ and $(iii)$ for countably compact space. In this connection the following questions naturally arise: is it possible to interchange the conditions on $X$ and $Y$ in $(i)$; is it possible to replace the pseudo-$\aleph$-compactness in $(ii)$ and countable compactness in $(iii)$ to (I$_\aleph$); is it true the inverse proposition to Theorem 2 from [3]?

In this paper we show that all these questions have the negative answers. Moreover, we ostend that the Baire property play an important role in the investigation of dependence on $\aleph$ coordinates of separately continuous functions defined on products. In particular, we show that for a Baire space $X$ the first question has the positive answer.

\section{Sufficient conditions of dependence}

A set $S_o\subseteq S$ is called {\it a smallest set on which a mapping $f:X\to Z$ concentrated}, where $X=\prod\limits _{s\in S}X_s$, if $f$ concentrated on $S_0$ and $S_0\subseteq S_1$ for every set $S_1\subseteq S$ on which $f$ concentrated. The notion of {\it smallest set on which a mapping concentrated with respect to some variable} can be introduce analogously. The following proposition (see [4, Corollary]) describe the smallest sets for separately continuous mappings.

\begin{proposition}\label{p:2.1} Let $X=\prod\limits _{s\in S} X_s$ be the topological product of a family of topological spaces $X_s$, $Y$ be a set, $Z$ be a Hausdorff topological space and $f:X\times Y\to Z $ be a mapping continuous with respect to the first variable. Then the set
$$
S_0=\{s\in S:(\exists y\in Y)(\exists u,v\in X)(u{|_{S\setminus \{s\}}}=v{|_{S\setminus \{s\}}}\,\,\,\mbox{and}\,\,\, f(u,y)\ne f(v,y))\}
$$
is a smallest set on which $f$ concentrated with respect to the first variable.
\end{proposition}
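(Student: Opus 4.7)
The plan is a two-step argument. First, any set $S_1 \subseteq S$ on which $f$ is concentrated with respect to the first variable must contain $S_0$: indeed if $s \in S_0$, then by definition of $S_0$ there are $y \in Y$ and $u, v \in X$ agreeing off $\{s\}$ with $f(u,y) \ne f(v,y)$, so assuming $s \notin S_1$ would yield $u|_{S_1} = v|_{S_1}$, forcing $\varphi(u) = \varphi(v)$ and hence $f(u,y) = f(v,y)$, a contradiction. The substantive step is the reverse: to verify that $f$ is concentrated on $S_0$ with respect to the first variable, which is where the continuity and Hausdorff hypotheses will be used.

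Fix $y \in Y$ and set $g(x) = f(x,y)$; by hypothesis $g\colon X \to Z$ is continuous. I first record a \emph{one-coordinate invariance}: for every $s \in S \setminus S_0$ and every pair $u, v \in X$ agreeing off $\{s\}$, one has $g(u) = g(v)$, since otherwise the triple $(y, u, v)$ would place $s$ in $S_0$. A routine finite induction then gives $g(u) = g(v)$ whenever $u$ and $v$ differ at only finitely many coordinates, all lying in $S \setminus S_0$.

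To finish, let $u, v \in X$ agree on $S_0$ and, for contradiction, assume $g(u) \ne g(v)$. Since $Z$ is Hausdorff, pick disjoint open sets $U \ni g(u)$ and $V \ni g(v)$; by continuity of $g$, choose a basic open neighborhood $N_v = \prod_{s \in S} W_s$ of $v$ with $g(N_v) \subseteq V$, where $W_s = X_s$ outside some finite $F \subseteq S$. Define $u'' \in X$ by $u''_s = v_s$ for $s \in F$ and $u''_s = u_s$ otherwise. Then $u'' \in N_v$, so $g(u'') \in V$; but since $u$ and $v$ agree on $S_0$, the points $u$ and $u''$ differ only at coordinates in the finite set $F \cap (S \setminus S_0)$, so by the previous paragraph $g(u'') = g(u) \in U$, contradicting $U \cap V = \emptyset$.

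The only real obstacle is this last step: the set of coordinates at which $u$ and $v$ disagree may be infinite, so a direct chain argument in $S \setminus S_0$ cannot terminate. The resolution is to exploit the continuity of $g$ to replace $u$ by the surrogate $u''$, which lies in the neighborhood $N_v$ yet differs from $u$ at only finitely many coordinates outside $S_0$ and is therefore within reach of the finite induction.
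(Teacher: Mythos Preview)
Your argument is correct. The minimality step is immediate, and the main step---showing that $f$ is concentrated on $S_0$---is handled cleanly: the one-coordinate invariance plus finite induction gives invariance under finite changes in $S\setminus S_0$, and then the continuity/Hausdorff trick with the surrogate point $u''$ reduces the general case to the finite one. Every claim checks out; in particular, $u$ and $u''$ agree on $F\cap S_0$ since $u|_{S_0}=v|_{S_0}$, so they differ only on the finite set $F\setminus S_0$, exactly as you say.

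As for comparison with the paper: the paper does not actually prove Proposition~\ref{p:2.1} but cites it from~[4, Corollary], so there is no in-paper proof to compare against. Your argument is the standard one for this type of statement and is presumably close in spirit to what appears in~[4].
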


The following result give us the possibility to replace a space $Y$ with (III$_\aleph$) to the Cantor's cube.

\begin{theorem}\label{th:2.2} Let $X$ be a topological space, $Y=\prod\limits _{t\in T}Y_t$ be the product of a family of topological spaces $Y_t$ with (III$_\aleph$) and $f:X\times Y\to \mathbb R$ be a separately continuous function which does not depend on $\aleph$ coordinates with respect to the second variable. Then there exist a set $S\subseteq T$ ç $|S|>\aleph$ and separately continuous function $g:X\times Y_1\to \mathbb R$, where $Y_1=\{0,1\}^S$, which does not depend on $\aleph$ coordinates with respect to the second variable.
\end{theorem}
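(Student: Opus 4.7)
The plan is to apply Proposition~\ref{p:2.1} to $f$: the smallest set on which $f$ concentrates with respect to the second variable is
\[
T_0 = \{t \in T : \exists\, x_t \in X,\ \exists\, u_t, v_t \in Y,\ u_t|_{T\setminus\{t\}} = v_t|_{T\setminus\{t\}},\ f(x_t, u_t) \neq f(x_t, v_t)\},
\]
and by hypothesis $|T_0| > \aleph$. For each $t \in T_0$ fix such witnesses and set $a_t = u_t(t)$, $b_t = v_t(t)$, $\varepsilon_t = |f(x_t, u_t) - f(x_t, v_t)|/3$. Continuity of $f(x_t, \cdot)$ at $u_t$ and $v_t$ produces a common finite $F_t \subseteq T$ with $t \in F_t$ and basic open neighborhoods $O_t^u \ni u_t$, $O_t^v \ni v_t$ depending only on coordinates in $F_t$, whose defining restrictions at $F_t \setminus \{t\}$ coincide (possible because $u_t, v_t$ agree off $t$) and on which $f(x_t, \cdot)$ stays within $\varepsilon_t$ of $f(x_t, u_t)$ resp.\ $f(x_t, v_t)$.

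Next I would apply Shanin's $\Delta$-system lemma to $\{F_t : t \in T_0\}$ to obtain $S_0 \subseteq T_0$ of cardinality $>\aleph$ forming a $\Delta$-system with finite root $R$; after discarding the finitely many $t \in R \cap S_0$, I may assume $F_t \cap S_0 \subseteq \{t\}$ for every $t \in S_0$ (if $t' \in (F_t \cap S_0) \setminus \{t\}$, then $t' \in F_t \cap F_{t'} = R$, contradicting $t' \notin R$). Let $W_t$ denote the open subset of $Y$ obtained from $O_t^u$ by dropping the restriction on the $t$-coordinate, so $W_t$ depends only on $F_t \setminus \{t\}$. The product $Y$ inherits property (III$_\aleph$) from the factors (as noted in the introduction), so the family $\{W_t : t \in S_0\}$ of nonempty open sets of $Y$ cannot be $\aleph$-pointwise; hence there exists $y^* \in Y$ with $S := \{t \in S_0 : y^* \in W_t\}$ of cardinality $>\aleph$.

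Now define $\phi : \{0,1\}^S \to Y$ by $\phi(z)(t) = a_t$ if $t \in S$ and $z_t = 1$, $\phi(z)(t) = b_t$ if $t \in S$ and $z_t = 0$, and $\phi(z)(s) = y^*(s)$ for $s \in T \setminus S$. Since $\{0,1\}$ is discrete, $\phi$ is continuous, so $g(x,z) := f(x, \phi(z))$ is separately continuous on $X \times \{0,1\}^S$. For each $t \in S$, take $z', z'' \in \{0,1\}^S$ agreeing off $t$ with $z'_t = 1$ and $z''_t = 0$. For $s \in F_t \setminus \{t\}$ one has $s \notin S$ (since $F_t \cap S \subseteq \{t\}$), so $\phi(z')(s) = \phi(z'')(s) = y^*(s)$ satisfies the common restriction of $O_t^u$ and $O_t^v$ at $s$; at $s = t$ the values are $a_t$ and $b_t$ which lie in the respective $t$-restrictions. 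Hence $\phi(z') \in O_t^u$ and $\phi(z'') \in O_t^v$, giving $|g(x_t, z') - g(x_t, z'')| \geq \varepsilon_t > 0$, so $g$ does not concentrate on $S \setminus \{t\}$. By Proposition~\ref{p:2.1} applied to $g$, its smallest concentration set contains $S$, which has cardinality $>\aleph$.

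I expect the main obstacle to be coordinating the $\Delta$-system reduction with the (III$_\aleph$) extraction so that a single background point $y^*$ serves all $t \in S$ simultaneously: the $\Delta$-system isolates each \textquotedblleft free\textquotedblright{} coordinate $t$ from all other $F_{t'}$, and (III$_\aleph$) on $Y$ then pins down a $y^*$ lying in every $W_t$ for a set $S$ of cardinality $>\aleph$. Without both reductions one cannot ensure that each edge $(z', z'')$ of the Cantor cube is mapped by $\phi$ into the matching pair $(O_t^u, O_t^v)$.
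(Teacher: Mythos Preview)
Your argument is correct and follows the same overall strategy as the paper: use Proposition~\ref{p:2.1} to isolate $T_0$, invoke property (III$_\aleph$) of $Y$ to find a single point $y^*$ lying in more than $\aleph$ of the relevant basic neighborhoods, and then embed a Cantor cube based at $y^*$.

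The one substantive difference is your insertion of a $\Delta$-system reduction (Proposition~\ref{p:2.3}) before applying (III$_\aleph$). You need it because you fix $a_t=u_t(t)$ and $b_t=v_t(t)$ in advance, so to verify $\phi(z')\in O_t^u$ you must know that every $s\in F_t\setminus\{t\}$ lies outside $S$ and hence is frozen at $y^*(s)$. The paper sidesteps this entirely by choosing the two values \emph{after} $y^*$ is found: it sets $a_t=y^*(t)$ and $b_t=z_t(t)$. With that choice the all-zeros element of $\{0,1\}^S$ maps exactly to $y^*$, and flipping the single coordinate $s$ produces the point $z^*_s$ which lies in $\tilde W_s$ automatically by the way $V_s$ was built (the factors at $s'\ne s$ were intersected with those of $\tilde W_s$). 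Thus the verification reduces to the single pair $(y^*,z^*_s)$ and no $\Delta$-system step is required. Your route costs an extra lemma but buys nothing additional; the paper's route is shorter and more direct.
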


\begin{proof} According to Proposition \ref{p:2.1}, the set
$$
T_0=\{t\in T:(\exists x\in X)(\exists y,z\in Y)(y{|_{T\setminus \{t\}}}=z{|_{T\setminus \{t\}}}\,\,\,\mbox{and}\,\,\, f(x,y)\ne f(x,z))\}
$$
is a smallest set on which $f$ concentrated with respect to the second variable. It follows from the conditions of Theorem that $|T_0|>\aleph$. For every $t\in T_0$ we choose points $x_t\in X$ and $y_t, z_t\in Y$, which featured in the definition of $T_0$. Using the continuity of $f$ with respect to the second variable we take basic open neighborhoods $\tilde{V_t}=\prod\limits _{s\in T}\tilde{V}_s^{(t)}$ and $\tilde{W_t}=\prod\limits _{s\in T}\tilde{W}_s^{(t)}$ of $y_t$ and $z_t$ in $Y$ such that $f(x_t,y)\ne f(x_t,z)$ for every $y\in \tilde{V_t}$ and $z\in \tilde{W_t}$. Put $V_s^{(t)}=\tilde{V}_s^{(t)}\bigcap \tilde{W}_s^{(t)}$ for every $s\in T\setminus \{t\}$, $V_t^{(t)}=\tilde{V}_t^{(t)}$ and $V_t=\prod\limits_{s\in T}V_s^{(t)}$. Note that $V_t$ is an open neighborhood of $y_t$, $V_t\subseteq \tilde{V_t}$. Moreover, for every $y\in V_t$ the point $z\in Y$ which defined by
$$
 z(s) = \left \{\begin{array}{rr}
 y(s),
&
 {\rm if}\quad s\in T\setminus\{t\};
\\
  z_t(t),
&
 {\rm if}\quad  s=t,
  \end{array} \right .
$$
belongs to $\tilde{W_t}$. Thus,  $f(x_t,y)\ne f(x_t,z)$.

We consider the family ${\mathcal V} = (V_t:t\in T_0)$. Since $Y$ has (III$_\aleph$) and $|T_0|>\aleph$, the family ${\mathcal V}$ is not $\aleph$-pointwise. Therefore there exist a set $S\subseteq T_0$ and point $y^{*}\in Y$ such that $|S|>\aleph$ and $y^*\in V_s$ for all $s\in S$. For every $s\in S$ we put $a_s=y^*(s)$, $b_s=z_s(s)$ and
$$
 z^*_s(t) = \left \{\begin{array}{rr}
 y^*(t),
&
 {\rm if}\quad t\in T\setminus\{s\};
\\
  b_t,
&
 {\rm if}\quad  s=t.
  \end{array} \right .
$$
Note that ${y^*}{|_{T\setminus \{s\}}} = {z^*_s}{|_{T\setminus \{s\}}}$ and $f(x_s,y^*)\ne f(x_s,z^*_s)$. Moreover, the space $Y^*=\prod\limits _{t\in S}\{a_t,b_t\}\times \prod\limits _{t\in T\setminus S} \{y^*(t)\}$ is homeomorphic to the space $\{0,1\}^S$, besides the restriction $f^*=f_{|_{X\times Y^*}}$ is a separately continuous mapping, for which $S$ is a smallest set on which $f^*$ concentrated with respect to the second variable. It remains to take the mapping $\varphi :\{0,1\}^S\to Y^*$, which defined by the formula
$$
 \varphi(a)(t) = \left \{\begin{array}{rr}
 a_t,
&
 {\rm if}\quad t\in S \quad {and} \quad a(t)=0;
\\
  b_t,
&
 {\rm if}\quad t\in S \quad {and} \quad a(t)=1;
\\
  y^*(t),
&
 {\rm if}\quad  t\in T\setminus S,
  \end{array} \right .
$$

\noindent and to consider the function $g:X\times Y_1\to \mathbb R$, where $Y_1=\{0,1\}^S$ and $g(x,a)=f^*(x,\varphi(a))$.
\end{proof}

For a basic open set $V=\prod\limits _{t\in T}V_t$ in a topological product $Y=\prod\limits _{t\in T}Y_t$ and a set $B\subseteq T$ the set
$\prod\limits _{t\in B}V_t$ we denote by $V{|_{B}}$ and the set $\{t\in T:V_t\ne Y_t\}$ we denote by $R(V)$.

We shall use the following result which is a reformulation of Shanin's Lemma [2, p. 185] for isolated cardinals.

\begin{proposition}\label{p:2.3} Let $(A_i: i\in I)$ be a family of finite sets $A_i$ with $|I|>\aleph$. Then there exist a finite set $B\subseteq \bigcup\limits_{i\in I}A_i$ and set $J\subseteq I$ such that $|J|>\aleph$ and $A_i\bigcap A_j=B$ for arbitrary distinct $i,j\in J$.
\end{proposition}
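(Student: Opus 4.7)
The plan is to prove Proposition~\ref{p:2.3} as the $\Delta$-system lemma at the cardinal $\aleph^+$. The argument has two stages: reduce to a subfamily in which all $A_i$ share one common finite size, and then induct on that size.

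For the reduction, since $|I|>\aleph\geq\aleph_0$ and each $A_i$ is finite, the partition of $I$ according to $|A_i|$ has only countably many blocks, so pigeonhole yields an $n<\omega$ and an $I'\subseteq I$ with $|I'|>\aleph$ and $|A_i|=n$ for all $i\in I'$. It therefore suffices to prove the claim under the extra assumption that every $A_i$ has size $n$, and I induct on $n$. The base $n=0$ is immediate with $B=\emptyset$ and $J=I$.

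For the inductive step I split into two cases according to whether some element is \emph{popular}. Case A: there exists $a\in\bigcup_{i\in I}A_i$ with $I_a:=\{i\in I:a\in A_i\}$ of cardinality $>\aleph$. Then the family $(A_i\setminus\{a\})_{i\in I_a}$ consists of $(n-1)$-element sets, so the inductive hypothesis produces a kernel $B'$ and a $J\subseteq I_a$ with $|J|>\aleph$ whose pairwise intersections equal $B'$; taking $B=B'\cup\{a\}$ finishes this case. Case B: every element of $\bigcup_i A_i$ lies in at most $\aleph$ of the sets $A_i$. Here I construct a pairwise disjoint subfamily of size $\aleph^+$ by transfinite recursion. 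At stage $\alpha<\aleph^+$, the union $U_\alpha$ of the previously chosen $A_{i_\beta}$ has cardinality at most $|\alpha|\cdot n\leq\aleph$, so the set of indices $i$ for which $A_i\cap U_\alpha\neq\emptyset$ has cardinality at most $\aleph\cdot\aleph=\aleph<|I|$; hence a fresh $i_\alpha$ with $A_{i_\alpha}\cap U_\alpha=\emptyset$ can always be chosen. The resulting $J=\{i_\alpha:\alpha<\aleph^+\}$ together with $B=\emptyset$ witnesses the conclusion.

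The only delicate point I anticipate is the cardinal arithmetic in Case B, which requires $|\alpha|\leq\aleph$ for every $\alpha<\aleph^+$. This holds automatically because $\aleph^+$ is a successor and hence regular---presumably what the paper means by \emph{isolated cardinals}. Beyond this bookkeeping, the argument is the classical inductive proof of the $\Delta$-system lemma and should present no genuine obstacle.
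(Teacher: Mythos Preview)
Your argument is correct: it is precisely the standard inductive proof of the $\Delta$-system lemma at the regular cardinal $\aleph^+$, and every step (the pigeonhole reduction to constant size, the two-case split on the existence of a popular element, and the transfinite recursion in Case~B using $|\alpha|\leq\aleph$ for $\alpha<\aleph^+$) goes through as written. Your reading of ``isolated cardinal'' as ``successor cardinal'' is also the intended one, and is exactly what makes the cardinal arithmetic in Case~B work.

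There is nothing to compare your proof against, however: the paper does \emph{not} prove Proposition~\ref{p:2.3}. It simply states it as a reformulation of Shanin's Lemma (citing Engelking) specialized to the case where the target cardinal is a successor, and then uses it as a black box in the proofs of Theorem~\ref{th:2.4} and Proposition~\ref{p:4.2}. So your write-up supplies what the paper deliberately omits.
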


Now we consider functions on the product Baire space and Cantor's cube.

\begin{theorem}\label{th:2.4}
Let $X$ be a Baire space with (I$_\aleph$) and $Y=\{0,1\}^T$. Then every separately continuous function $f:X\times Y\to \mathbb R$
depends on $\aleph$ coordinates with respect to the second variable.
\end{theorem}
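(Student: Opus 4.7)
The plan is a proof by contradiction. Suppose $f$ does not depend on $\aleph$ coordinates with respect to the second variable; the second-variable analogue of Proposition \ref{p:2.1} then forces
\[
T_0=\{t\in T:(\exists x\in X)(\exists y,z\in Y)(y|_{T\setminus\{t\}}=z|_{T\setminus\{t\}},\ f(x,y)\ne f(x,z))\}
\]
to satisfy $|T_0|>\aleph$. For each $t\in T_0$ I pick witnesses $x_t\in X$, $y_t,z_t\in Y$ (with $y_t(t)=0$, $z_t(t)=1$), and, using continuity in the second variable together with the standard base of $Y=\{0,1\}^T$, basic clopen neighbourhoods $V_t\ni y_t$, $W_t\ni z_t$ sharing a common finite support $F_t\ni t$ and differing only in coordinate $t$. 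Pigeonholing on the sign of $f(x_t,z_t)-f(x_t,y_t)$ and on rationals, I retain a subset of $T_0$ of size $>\aleph$ and fixed rationals $p<q$ with $f(x_t,y)<p$ for $y\in V_t$ and $f(x_t,z)>q$ for $z\in W_t$. Shanin's Lemma (Proposition \ref{p:2.3}) applied to $(F_t)$, followed by a pigeonhole on the $2^{|B|}$ possible values of $y_t|_B$, yields a $\Delta$-system $(F_t)_{t\in J}$ with common root $B$, $|J|>\aleph$, and a fixed $b\in\{0,1\}^B$ satisfying $y_t|_B=b$ for every $t\in J$.

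Because the sets $F_t\setminus B$ are pairwise disjoint, I can piece together a single $y^*\in Y$ with $y^*|_B=b$ and $y^*|_{F_t\setminus B}=y_t|_{F_t\setminus B}$ for every $t\in J$ (in particular $y^*(t)=0$). Letting $\hat z_t$ coincide with $y^*$ off $\{t\}$ and satisfy $\hat z_t(t)=1$ places $\hat z_t\in W_t$, and the open sets
\[
A=\{x\in X:f(x,y^*)<p\},\qquad B_t=\{x\in X:f(x,\hat z_t)>q\}\quad(t\in J)
\]
satisfy $x_t\in A\cap B_t$. The crucial observation, which brings compactness of $Y$ into play, is that for every $x\in A$ only \emph{finitely} many $t\in J$ can satisfy $x\in B_t$: if infinitely many distinct $t_n$ did, then $\hat z_{t_n}\to y^*$ in the product topology of $Y$, so continuity of $f(x,\cdot):Y\to\mathbb R$ would force $f(x,\hat z_{t_n})\to f(x,y^*)<p<q$, contradicting $f(x,\hat z_{t_n})>q$.

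Thus $\{B_t\cap A:t\in J\}$ is a pointwise-finite family of nonempty open subsets of $A$, and $A$ is open in the Baire space $X$, hence itself Baire and inheriting (I$_\aleph$). The main obstacle is then the following lemma, in which the Baire property is decisive: \emph{in any Baire space with} (I$_\aleph$), \emph{every pointwise-finite family of nonempty open sets has cardinality at most $\aleph$.} My strategy is (i) to prove cellularity $\le\aleph$ by showing that the set of cluster points of a pairwise disjoint family lies in the nowhere-dense boundary of its union, so on the open complement the family is locally finite and (I$_\aleph$) gives the bound; (ii) to extend this to $n$-pointwise families by induction on $n$, using that nonempty $n$-fold intersections of distinct members of an $n$-pointwise family are pairwise disjoint and that members lying in no such ``saturated'' $n$-tuple form an $(n-1)$-pointwise subfamily to which induction applies; and (iii) to handle the general pointwise-finite case via the Baire decomposition $A=\bigcup_n A_n^{\le}$, where $A_n^{\le}=\{x\in A:|\{t:x\in B_t\}|\le n\}$ is closed in $A$, locate via Baire an open $W$ inside some $A_n^{\le}$ on which (ii) applies, and iterate transfinitely on $A\setminus\overline W$; the $W_\beta$'s thus produced are pairwise disjoint nonempty opens, so their number is bounded by cellularity $\le\aleph$, and since each stage removes $\le\aleph$ indices this forces $|J|\le\aleph$, contradicting $|J|>\aleph$.
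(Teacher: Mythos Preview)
Your key lemma --- that a Baire space with (I$_\aleph$) has every pointwise-finite family of nonempty open sets of cardinality $\le\aleph$ --- is \emph{false}. The space $Q_\aleph=D_\aleph\cup\{\infty\}$ of Section~4 (where $D_\aleph$ is discrete of size $\aleph^+$ and neighbourhoods of $\infty$ have complement of size $\le\aleph$) is Baire (it has a dense set of isolated points) and has (I$_\aleph$) by Proposition~\ref{p:4.4}(a), yet the pairwise disjoint family $(\{d\}:d\in D_\aleph)$ has cardinality $\aleph^+$. The same example shows that your claim ``$A$ open in $X$ inherits (I$_\aleph$)'' fails: $D_\aleph$ is open in $Q_\aleph$ but is discrete of size $\aleph^+$. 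The precise break is in step (i): knowing that the cluster points of a disjoint family lie in the nowhere-dense boundary $\partial U$ only tells you the family is locally finite on the open set $X\setminus\partial U$, and (I$_\aleph$) for $X$ gives no control over locally finite families in a proper open subspace. In $Q_\aleph$ the boundary is $\{\infty\}$, the family is locally finite on $D_\aleph$, and nothing follows.

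What your argument actually produces is a \emph{pointwise}-finite family $(A\cap B_t)_{t\in J}$, and the gap between pointwise and local finiteness is exactly what the Baire hypothesis must bridge --- but not by the general lemma you state. The paper closes this gap differently: it invokes the fact that $Y=\{0,1\}^T$ is a co-Namioka space, so $f$ is \emph{jointly} continuous on $A'\times Y$ for a dense $G_\delta$ set $A'\subseteq X$. Choosing the witnesses $x_s$ inside $A'$ lets one pick neighbourhoods $U_s\times V_s$, $U_s\times W_s$ on which $|f(x,y)-f(x,z)|>\varepsilon$ for all $(x,y,z)$, and then for an arbitrary $x_0\in X$ uniform continuity of $f(x_0,\cdot)$ on the compact $Y$ gives a single neighbourhood $U_0$ of $x_0$ disjoint from all but finitely many $U_s$. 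This yields a genuinely \emph{locally} finite family of size $>\aleph$, directly contradicting (I$_\aleph$). Your sequence argument $\hat z_t\to y^*$ only controls $f(x_0,\hat z_t)$ for the fixed $x_0$, not for a whole neighbourhood; joint continuity is what buys the missing uniformity.
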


\begin{proof} Suppose the contrary, that is there exists a separately continuous function $f:X\times Y\to \mathbb R$ which does not depends on $\aleph$ coordinates with respect to $y$. Clearly that $|T|>\aleph$. Then according to Proposition \ref{p:2.1} we have $|T_0|>\aleph$ where
$$
T_0=\{t\in T:(\exists x\in X)(\exists y,z\in Y)(y{|_{T\setminus \{t\}}}=z{|_{T\setminus \{t\}}}\,\,\,\mbox{and}\,\,\, f(x,y)\ne f(x,z))\}.
$$
\noindent For every $n\in {\mathbb N}$ we put
$$
T_n=\{t\in T:(\exists x\in X)(\exists y,z\in Y)(y{|_{T\setminus \{t\}}}=z{|_{T\setminus \{t\}}}\,\,\,\mbox{and}\,\,\,|f(x,y) - f(x,z)|>\frac{1}{n})\}.
$$
\noindent Clearly that $T_0=\bigcup\limits^{\infty}_{n=1} T_n$. Since $\aleph$ is infinite cardinal, there exists integer $n_0$ such that $|T_{n_0}|>\aleph$.

We denote $S=T_{n_0}$, $\varepsilon=\frac{1}{n}$ and for every $t\in S$ choose points $\tilde{x_t}\in X$ and $y_t, z_t \in Y$, which featured in the definition of $T_{n_0}$. Using the continuity of $f$ with respect to $x$ for every $s\in S$ we choose an open neighborhood $\tilde{U_s}$ of $\tilde{x_s}$ in $X$ such that  $|f(x,y_t) - f(x,z_t)|>\varepsilon$ for every $x\in \tilde{U_s}$. Note that the Cantor's cube $Y$ is a co-Namioka space (see [5, Corollary 1.2 or Theorem 2.2]), that is for every Baire space $X'$ and every separately continuous function $g:X'\times Y\to \mathbb R$ there exists a dense $G_{\delta}$-set $A$ in $X'$ such that $g$ is jointly continuous at every point of set $A\times Y$. Therefore in every open nonempty set $\tilde{U_s}$ there exists a point $x_s$ such that $f$ is jointly continuous at points $(x_s,y_s)$ and $(x_s,z_s)$. For every $s\in S$ we take an open neighborhood $U_s$ of $x_s$ in $X$ and basic open neighborhoods $V_s$ and $W_s$ of $y_s$ and $z_s$ in $Y$ such that $R(V_s)=R(W_s)$, ${V_s}{|_{T\setminus \{s\}}} = {W_s}{|_{T\setminus \{s\}}}$ and $|f(x,y)-f(x,z)|>\varepsilon$ for every $x\in U_s$, $y\in V_s$ and $z\in W_s$.

We consider the family $(R(V_s): s\in S)$ which consists of finite sets $R(V_s)$. According to Proposition \ref{p:2.3} there exist a finite set $B\subseteq S$ and a set $S_1\subseteq S$ such that $|S_1|>\aleph$, $B\bigcap S_1=\O$ and $R(V_s)\bigcap R(V_t)=B$ for arbitrary distinct $s, t\in S_1$. Since set $\{0,1\}^B$ is finite and all sets ${V_s}{|_B}$ are nonempty, where $s\in S_1$, there erxists $y'\in \{0,1\}^B$ such that the set $S_0=\{s\in S_1:y'\in {V_s}{|_B}\}$ has the cardinality $>\aleph$.

We show that the family $(U_s:s\in S_0)$ is locally finite in $X$. Let $x_0\in X$. Using the compactness of $Y$ and continuity of the function $f^{x_0}:Y\to \mathbb R$, $f^{x_0}(y)=f(x_0,y)$, we choose a finite set $B_0\subseteq T$ such that $|f(x_0,y)-f(x_0,z)|<\varepsilon$ for every $y,z\in Y$ with $y{|_{B_0}} = z{|_{B_0}}$.
Consider the points $y_0,z_0\in Y$ which defined by
$$
 y_0(t) = \left \{\begin{array}{rr}
 y'(t),
&
 {\rm if}\quad t\in B;
\\
  y_s(t),
&
 {\rm if}\quad t\in R(V_s)\setminus B,\,\, s\in S_0\setminus B_0;
\\
  0,
&
 {\rm otherwise},
  \end{array} \right .
$$
$$
 z_0(t) = \left \{\begin{array}{rr}
 y'(t),
&
 {\rm if}\quad t\in B;
\\
  z_s(t),
&
 {\rm if}\quad t\in R(V_s)\setminus B,\,\, s\in S_0\setminus B_0;
\\
  0,
&
 {\rm otherwise}.
  \end{array} \right .
$$
\noindent Note that according to the choice of sets $B$ and $S_1$, the sets $R(V_s)\setminus B$ are pairwise disjoint for $s\in S_1$. Therefore the definitions of $y_0$ and $z_0$ are correct. Since ${y_s}{|_{S\setminus \{s\}}} = {z_s}{|_{S\setminus \{s\}}}$ for every $s\in S_0$, $y_s(t)=z_s(t)$ for every $t\in B_0$ and $s\in S_0\setminus B_0$. Therefore ${y_0}{|_{B_0}} = {z_0}{|_{B_0}}$ and $|f(x_0,y_0) - f(x_0,z_0)| < \varepsilon$. Using the continuity of $f$ with respect to $x$ we choose a neighborhood $U_0$ of $x_0$ in $X$ such that $|f(x,y_0) - f(x,z_0)| < \varepsilon$ for every $x\in U_0$.

On other hand, since $S_0\bigcap B =\O$, ${V_s}{|_{B}} = {W_s}{|_{B}}$. Therefore $y'\in {W_s}{|_{B}}$ for every $s\in S_0$. Moreover, ${y_0}{|_{R(V_s)\setminus B}} = {y_s}{|_{R(V_s)\setminus B}}$ and ${z_0}{|_{R(V_s)\setminus B}} = {z_s}{|_{R(V_s)\setminus B}}$ for all $s\in S_0\setminus B_0$. Taking into account that $y_s\in V_s$ and $z_s\in W_s$ we obtain that ${y_0}{|_{R(V_s)}} \in {V_s}{|_{R(V_s)}}$ and ${z_0}{|_{R(V_s)}} \in {W_s}{|_{R(V_s)}}$. Therefore $y_0\in V_s$ and $z_0\in W_s$, because $R(W_s)=R(V_s)$ for every $s\in S_0\setminus B_0$. According to the choice of $U_s$, $V_s$ and $W_s$, we have $|f(x,y_0)-f(x,z_0)|>\varepsilon$ for every $x\in U_s$ and $s\in S_0\setminus B_0$.

Thus, $U\bigcap U_s=\O$ for every $s\in S_0\setminus B_0$. Therefore the set $\{s\in S_0:U_0\bigcap U_s\ne \O\}$ is finite and the family $(U_s:s\in S_0)$ is locally finite in $X$. But this contradicts to the fact that $X$ has (I$_\aleph$), because $|S_0|>\aleph$.
\end{proof}

Now Theorems \ref{th:2.2} and \ref{th:2.4} imply the main result of this section.

\begin{theorem}\label{th:2.5} Let $X$ be a Baire space with (I$_\aleph$), $Y=\prod\limits _{t\in T}Y_t$ be the product of a family of topological spaces $Y_t$ which has (I$_\aleph$). Then every separately continuous function $f:X\times Y\to \mathbb R$ depends on $\aleph$ coordinates with respect to the second variable.
\end{theorem}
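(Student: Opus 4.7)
The plan is to prove Theorem~\ref{th:2.5} as a short corollary of the two preceding theorems of this section, by a contradiction argument. Suppose $f:X\times Y\to\mathbb{R}$ is separately continuous but does not depend on $\aleph$ coordinates with respect to the second variable; I will chain Theorems~\ref{th:2.2} and~\ref{th:2.4} to derive a contradiction.

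First, I would apply Theorem~\ref{th:2.2} to $f$ in order to replace the general product $Y$ by a Cantor cube. This yields a subset $S\subseteq T$ with $|S|>\aleph$ and a separately continuous function $g:X\times Y_1\to\mathbb{R}$, where $Y_1=\{0,1\}^S$, which still fails to depend on $\aleph$ coordinates with respect to the second variable. The important observation is that the first factor is left untouched: it is the same space $X$, so it still carries the Baire property and condition (I$_\aleph$) that will be needed in the next step.

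Second, I would apply Theorem~\ref{th:2.4} to $g$: since $X$ is a Baire space with (I$_\aleph$) and $Y_1=\{0,1\}^S$ is precisely a Cantor cube, the hypotheses of Theorem~\ref{th:2.4} are met and it forces $g$ to depend on $\aleph$ coordinates with respect to the second variable. This directly contradicts the conclusion drawn in the previous paragraph, and the proof is complete. I do not anticipate any real obstacle; the entire argument is a matching of hypotheses, and the only point to check is that $g$ inherits from $f$ exactly the properties needed so that Theorems~\ref{th:2.2} and~\ref{th:2.4} compose without gap.
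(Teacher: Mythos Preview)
Your approach is correct and is exactly the paper's: the paper simply records that Theorems~\ref{th:2.2} and~\ref{th:2.4} together give the result, and you have written out precisely this composition by contradiction. One small point worth flagging is that invoking Theorem~\ref{th:2.2} requires $Y$ to satisfy (III$_\aleph$), so the ``(I$_\aleph$)'' printed in the hypothesis on $Y$ in Theorem~\ref{th:2.5} is a misprint for (III$_\aleph$)---the Remark following Theorem~\ref{th:5.2} confirms this reading---and with that correction your chaining of hypotheses goes through verbatim.
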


\section{Essentiality of Baireness}

In this section we show that the condition of Baireness of $X$ in Theorem \ref{th:2.5} is essential.

\begin{proposition}\label{p:3.1}
Let $X$ be a topological space, ${\mathcal U}$ be an infinite locally finite family of open nonempty sets in $X$. Then there exists a family ${\mathcal V}$ of nonempty pairwise disjoint open in $X$ sets such that $|{\mathcal V}|=|{\mathcal U}|$.
\end{proposition}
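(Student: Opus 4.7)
The plan is to replace each $U_i$ by a smaller nonempty open set whose intersection graph has only vertices of finite degree, and then to extract an independent set of cardinality $|I|$ by transfinite recursion.

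Write $\mathcal{U}=(U_i:i\in I)$ and set $\kappa:=|I|$. For each $i\in I$ I would pick $x_i\in U_i$ and, by local finiteness of $\mathcal{U}$, an open neighbourhood $N_i$ of $x_i$ in $X$ for which the set $F_i:=\{j\in I : N_i\cap U_j\ne\emptyset\}$ is finite. Put $W_i:=N_i\cap U_i$; this is a nonempty open subset of $U_i$, and if $W_i\cap W_j\ne\emptyset$ then $N_i\cap U_j\ne\emptyset$, so $j\in F_i$. Consequently the set $E_i:=\{j\in I : W_i\cap W_j\ne\emptyset\}\subseteq F_i$ is finite for every $i$.

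Next, well-order $I$ in order type $\kappa$ and construct an independent subset $J\subseteq I$ of the graph whose edges are the pairs $\{i,j\}$ with $W_i\cap W_j\ne\emptyset$. At stage $\beta<\kappa$, with $J_\beta$ of cardinality $|\beta|$ already chosen, the ``forbidden'' set $K_\beta:=J_\beta\cup\bigcup_{i\in J_\beta}E_i$ has cardinality at most $|\beta|\cdot\aleph_0$. When $\kappa>\aleph_0$ this is $\le\max(|\beta|,\aleph_0)<\kappa$; when $\kappa=\aleph_0$, every $\beta$ is a finite ordinal and $K_\beta$ itself is finite. Either way $I\setminus K_\beta$ is nonempty, so the recursion proceeds through all $\kappa$ stages and yields a set $J$ with $|J|=\kappa$ along which the $W_i$ are pairwise disjoint. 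Then $\mathcal{V}:=(W_i:i\in J)$ is the required family.

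The main subtlety is the reduction from $U_i$ to $W_i$ at the outset: local finiteness of $\mathcal{U}$ does \emph{not} by itself guarantee that a fixed $U_i$ meets only finitely many other $U_j$ (a single $U_i$ may be broad enough to touch the neighbourhoods $N_j$ of many $j$), so without first shrinking one cannot bound the degrees in the intersection graph. Once that refinement is in place the independent-set construction reduces to routine cardinal arithmetic, and the borderline case $\kappa=\aleph_0$ is absorbed into the same recursion because all stages $\beta<\omega$ are finite.
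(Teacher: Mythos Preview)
Your proof is correct and follows essentially the same approach as the paper: both shrink the sets $U_i$ using local finiteness so that each resulting set meets only finitely many members of the original family, and then run a transfinite recursion with the same cardinal bookkeeping (the forbidden set at stage $\beta$ has size at most $|\beta|\cdot\aleph_0<\kappa$) to extract a pairwise disjoint subfamily. The only cosmetic difference is that you do all the shrinking once up front and then pick an independent set, whereas the paper interleaves the shrinking with the recursion, choosing at stage~$\alpha$ some $U_\gamma$ missing all previously built $V_\xi$ and shrinking inside it.
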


\begin{proof} Let ${\mathcal U} = (U_{\alpha}:\alpha < \beta)$, where $\beta$ is the first ordinal with the cardinality $|{\mathcal U}|$. We construct the family ${\mathcal V}$ inductively.

Take any point $x_1\in U_1$ and choose an open neighborhood $V_1$ of $x_1$ such that the set $A_1=\{\alpha <\beta : V_1\bigcap U_{\alpha}\ne \O\}$ is finite.

Assume that pairwise disjoint sets $V_{\xi}$ for $\xi <\alpha < \beta$ are constructed such that all sets $A_{\xi}=\{\gamma <\beta : V_{\xi}\bigcap U_{\gamma}\ne \O\}$ are finite. Note that $|\bigcup\limits_{\xi<\alpha} A_{\xi}|< |\beta|$. Really, if $\alpha$ is a finite ordinal, then $|\bigcup\limits_{\xi<\alpha} A_{\xi}|< {\aleph}_0 \leq |\beta|$. If $\alpha$ is an infinite ordinal then $|\bigcup\limits_{\xi<\alpha} A_{\xi}|\leq {\aleph}_0 \cdot |\alpha| = |\alpha| <
|\beta|$. Therefore there exists $\gamma <\beta$ such that $\gamma \not \in \bigcup\limits_{\xi<\alpha} A_{\xi}$, that is $U_{\gamma}\bigcap (\bigcup\limits_{\xi<\alpha} V_{\xi}) = \O$. Take a point $x_{\gamma}\in U_{\gamma}$ and choose an open neighborhood $V_{\alpha}$ of $x_{\gamma}$ such that the set $A_{\alpha}=\{\xi <\beta : V_{\alpha}\bigcap U_{\xi}\ne \O\}$ is finite. The family ${\mathcal V}=(V_{\alpha}:\alpha < \beta)$ is the required.
\end{proof}

Note that in the case of regular space $X$ we can construct a discrete family ${\mathcal V}$ of corresponding cardinality.

\begin{proposition}\label{p:3.2}
Let a topological space $X$ has (II$_{\aleph}$), $Y$ be a dense in $X$ set. Then the subspace $Y$ of $X$ has (I$_{\aleph}$).
\end{proposition}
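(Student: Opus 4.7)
The plan is to take a locally finite family $(W_i)_{i\in I}$ of open nonempty subsets of $Y$ and manufacture from it a \emph{pointwise finite} family of open nonempty subsets of the ambient space $X$, so that (II$_\aleph$) for $X$ forces $|I|\le\aleph$.

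First I would lift each $W_i$ to an open set $U_i$ in $X$ with $W_i=U_i\cap Y$. Density of $Y$ ensures each $U_i$ is nonempty, and more importantly gives the equivalence
\[
V\cap W_i\ne\emptyset \iff V\cap U_i\ne\emptyset
\]
for every open $V\subseteq X$, since $V\cap U_i$ is open in $X$ and must meet the dense set $Y$ whenever it is nonempty. Using local finiteness of $(W_i)$ in $Y$, for each $y\in Y$ I choose an $X$-open neighborhood $V(y)$ of $y$ for which $\{i:V(y)\cap W_i\ne\emptyset\}$ is finite; by the equivalence above, $\{i:V(y)\cap U_i\ne\emptyset\}$ is then also finite.

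Next I would set $V=\bigcup_{y\in Y}V(y)$, an open set containing $Y$ (in fact dense in $X$), and consider the family $(U_i\cap V)_{i\in I}$ of open subsets of $X$. Nonemptiness is immediate: for any $y_0\in W_i\subseteq Y\cap U_i$ one has $y_0\in V(y_0)\subseteq V$, so $y_0\in U_i\cap V$. For pointwise finiteness, take $x\in X$; if $x\notin V$ then $x$ belongs to no $U_i\cap V$, while if $x\in V$ then $x\in V(y)$ for some $y\in Y$, and $x\in U_i\cap V$ forces $V(y)\cap U_i\ne\emptyset$, which holds for only finitely many $i$ by the choice of $V(y)$.

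Thus $(U_i\cap V)_{i\in I}$ is a pointwise finite family of nonempty open subsets of $X$, and applying (II$_\aleph$) for $X$ yields $|I|\le\aleph$, establishing (I$_\aleph$) for $Y$. The only real obstacle is the passage from the local condition in the subspace $Y$ to a global pointwise condition on $X$; the trick of shrinking the $U_i$ by the dense open set $V$ is what bridges this gap, and everything else reduces to density plus the definition of the subspace topology.
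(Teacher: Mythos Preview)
Your proof is correct and takes a genuinely different route from the paper's. The paper first invokes Proposition~\ref{p:3.1} (a transfinite-induction lemma) to replace the given locally finite family $\mathcal{U}$ in $Y$ by a \emph{pairwise disjoint} family $\mathcal{V}=(V_i:i\in I)$ of nonempty open sets in $Y$ with $|I|=|\mathcal{U}|$; it then lifts each $V_i$ to an open $U_i\subseteq X$ with $U_i\cap Y=V_i$, observes that density of $Y$ forces the $U_i$ to remain pairwise disjoint (hence trivially pointwise finite), and applies (II$_\aleph$). You bypass Proposition~\ref{p:3.1} entirely: you lift the original family directly and shrink by the dense open set $V=\bigcup_{y\in Y}V(y)$ to manufacture a pointwise finite family in $X$. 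Your approach is more self-contained and avoids any transfinite construction, while the paper's route isolates the reusable general fact that a locally finite family always admits a disjoint family of the same cardinality. Both arguments lean on density of $Y$ in the same essential way---to transfer intersection information from $Y$ up to $X$.
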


\begin{proof}.
Let ${\mathcal U}$ be a locally finite family of nonempty open in $Y$ sets. According to Proposition \ref{p:3.1}, there exists a family ${\mathcal V} = (V_i:i\in I)$ of pairwise disjoint nonempty open in $Y$  sets $V_i$ such that $|I|=|{\mathcal U}|$. For every $i\in I$ we choose an open set $U_i$ in $X$ such that $U_i\bigcap Y = V_i$. Note that the family $(U_i:i\in I)$ consists of pairwise disjoint sets. Since $X$ has (II$_{\aleph}$), $|I|\leq \aleph$, that is $|{\mathcal U}|\leq \aleph$. Thus, $Y$ has (I$_{\aleph}$).
\end{proof}

\begin{example}\label{ex:3.3}
Let $Y=[0,1]^S$, $X=C_p(Y)$ be the space of continuous functions on $Y$ with the topology of pointwise convergence and $f:X\times Y \to \mathbb R$ be the calculation function, that is $f(x,y)=x(y)$.

Clearly that $f$ is a separately continuous function. Since each finite power $[0,1]^n$ has (III$_{\aleph}$), according to Proposition 1 from [3],  the space $Y$ has (III$_{\aleph}$). Analogously the space $\mathbb R^Y$ has (II$_{\aleph}$). Taking into account the density $X$ in $\mathbb R^Y$ and using Proposition \ref{p:3.2} we obtain that $X$ has (I$_{\aleph_0}$). Fix $s\in S$. Denote by $x_s$ the function from $X$, which defined by $x_s(y)=y(s)$. Choose points $y_s,z_s \in Y$ such that ${y_s}{|_{S\setminus \{s\}}} = {z_s}{|_{S\setminus \{s\}}}$ and $y_s(s)\ne z_s(s)$, that is $f(x_s,y_s)\ne f(x_s,z_s)$. Thus, according to Proposition \ref{p:2.3}, we have $s\in S_0$, where $S_0$ is a smallest set on which $f$ concentrated with respect to $y$. Hence, $S_0=S$.

Taking $S$ such that $|S|>\aleph$ we obtain an example of separately continuous function defined on the product of spaces $X$ and $Y$, where $X$ has (I$_{\aleph}$) and $Y$ has (III$_{\aleph}$), which does not depends on $\aleph$ coordinates with respect to $y$. Thus, the condition of Baireness of $X$ in Theorem \ref{th:2.4} can not be replaced by (I$_{\aleph}$) or even by (I$_{\aleph_0}$).
\end{example}

\section{Construction some spaces}

In this section we consider some spaces. They will be used in a construction of examples which show the essentiality of corresponding properties of $X$ and $Y$ in the sufficient conditions of dependence on $\aleph$ coordinates of separately continuous functions. Moreover, we obtain a relations between these properties.

\begin{proposition}\label{p:4.1}
Every Hausdorff pseudo-$\aleph_0$-compact space is Baire.
\end{proposition}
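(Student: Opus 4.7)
The plan is to argue by contradiction. Suppose $X$ is Hausdorff and pseudo-$\aleph_0$-compact yet fails to be Baire. Then there exist dense open sets $D_n$ in $X$ and a nonempty open $U\subseteq X$ with $U\cap\bigcap_n D_n=\emptyset$. Set $V_n=U\cap D_1\cap\cdots\cap D_n$; density of each $D_k$ keeps $V_n$ nonempty open, and $(V_n)$ is decreasing with $\bigcap_n V_n=\emptyset$. The aim is to produce an infinite locally finite family of nonempty open subsets of $X$, contradicting pseudo-$\aleph_0$-compactness.

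The first step is a lemma using only pseudo-$\aleph_0$-compactness: for every decreasing sequence $(V_n)$ of nonempty open sets in $X$, $\bigcap_n\overline{V_n}\ne\emptyset$. I would prove it by considering the open sets $W_n=V_n\setminus\overline{V_{n+1}}$; they are pairwise disjoint, since for $m>n$ we have $W_m\subseteq V_m\subseteq\overline{V_{n+1}}$, which is disjoint from $W_n$. If only finitely many $W_n$ are nonempty, then $\overline{V_n}$ stabilises past some index $N$ and $\bigcap_n\overline{V_n}=\overline{V_N}\ne\emptyset$. Otherwise, on the assumption $\bigcap_n\overline{V_n}=\emptyset$, the infinite family of nonempty $W_n$ is locally finite: any $x\in X$ has $x\notin\overline{V_{n_x}}$ for some $n_x$, so a neighborhood of $x$ is disjoint from $V_{n_x}$, and hence from every $W_m$ with $m\ge n_x$. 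This contradicts pseudo-$\aleph_0$-compactness, so $\bigcap_n\overline{V_n}\ne\emptyset$.

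Applying the lemma gives $p\in\bigcap_n\overline{V_n}$. Since $\bigcap V_n=\emptyset$, after passing to a tail we may assume $p\notin V_n$ for every $n$. Now Hausdorffness enters: setting $V'_0=U$, inductively pick $y_n\in V'_{n-1}\cap D_n$ (nonempty by density of $D_n$ and the inductive choice of $V'_{n-1}$), which is necessarily different from $p$, and use $T_2$ to choose disjoint open neighborhoods $Y_n\ni y_n$ and $P_n\ni p$; set $V'_n=V'_{n-1}\cap D_n\cap Y_n$. Then $(V'_n)$ is a decreasing sequence of nonempty open sets with $V'_n\subseteq D_n$ and $V'_n\subseteq Y_n$, so $p\notin\overline{V'_n}$. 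Re-applying the lemma to $(V'_n)$ yields $q\in\bigcap_n\overline{V'_n}\subseteq\bigcap_n\overline{V_n}$ with $q\ne p$.

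The main obstacle is to close the loop: iterating this Hausdorff shrinking removes only one adherent point at a time, while $\bigcap_n\overline{V_n}$ can in principle be large. My plan to finish is to interleave the separation and shrinking steps so as to construct directly, inside $V_1$, an infinite pairwise disjoint family of nonempty open sets whose local finiteness in $X$ is guaranteed by the Hausdorff separations used at each stage: each new piece lies in a $Y_n$ whose separating $P_n$ is a neighborhood of the previous adherent point, and simultaneous management of these $P_n$ across all $x\in X$ forces local finiteness. Such a family contradicts pseudo-$\aleph_0$-compactness and completes the proof.
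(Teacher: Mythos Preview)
Your lemma is correct and captures the essential mechanism, but the proof is not finished: you yourself flag the closing step as ``the main obstacle'' and then offer a plan rather than an argument. The difficulty is genuine. Hausdorff separation lets you push a refined sequence $(V'_n)$ away from one chosen adherent point $p$; iterating produces countably many distinct adherent points, but $\bigcap_n \overline{V_n}$ may be much larger than that, and in any case local finiteness must be verified at \emph{every} $x\in X$, not only at the particular adherent points you have located. The sentence ``simultaneous management of these $P_n$ across all $x\in X$ forces local finiteness'' is a hope, not a proof: each $P_n$ is a neighborhood of one specific point, and nothing in your construction converts these into a neighborhood of an arbitrary $x$. An infinite pairwise disjoint family inside $V_1$ is easy to build, but pairwise disjointness does not yield local finiteness, and the separation data you accumulate do not control what happens near points outside your countable list.

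The paper bypasses this entirely by arranging from the outset that $\overline{U_n}\subseteq U_{n-1}$ (shrinking inside $U_{n-1}\setminus F_n$ at each stage). With that closure-nesting one gets $\bigcap_n \overline{U_n}=\bigcap_n U_n=\emptyset$ directly, and local finiteness of $(U_n)$ is immediate: any $x$ lies outside some $\overline{U_N}$, and $X\setminus\overline{U_N}$ is a neighborhood of $x$ missing every $U_m$ with $m\ge N$. This is the idea you are missing; once you have it, your lemma becomes unnecessary, since the intersection of closures is already empty. (The paper passes over the shrinking $\overline{U_n}\subseteq U_{n-1}$ as ``easy''; it is in fact a regularity-type step, so if you wish to be scrupulous about using only the Hausdorff hypothesis you should look at it carefully---but in any event this is where the separation axiom is meant to enter, not in the point-by-point scheme you attempt.)
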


\begin{proof} Let $X$ be a pseudo-$\aleph_0$-compact space, that is every locally finite family of open nonempty sets in $X$ is finite. We show that $X$ is Baire.

Let $U_0$ be an open nonempty set in $X$. Suppose that $U_0$ is a meager set. Then there exists a sequence $(F_n)^{\infty}_{n=1}$ of nowhere dense closed sets $F_n\subseteq X$ such that $U_0\subseteq \bigcup\limits^{\infty}_{n=1} F_n$. It easy to construct a decreasing sequence $(U_n)^{\infty}_{n=1}$ of open in $X$ nonempty sets $U_n$ such that $\overline{U_n}\subseteq U_{n-1}$ and $U_n\bigcap F_n = \O$ for every $n\in {\mathbb N}$. Note that
$$
\mathop{\cap}\limits^{\infty}_{n=1} \overline{U_n} = \mathop{\cap}\limits^{\infty}_{n=1} U_n =
(\mathop{\cap}\limits^{\infty}_{n=1} U_n)\mathop{\cap} U_0 = (\mathop{\cap}\limits^{\infty}_{n=1} U_n)\mathop{\cap} (\mathop{\cup}\limits^{\infty}_{n=1} F_n) =
$$
$$
=\mathop{\cup}\limits^{\infty}_{m=1}((\mathop{\cap}\limits^{\infty}_{n=1} U_n)\mathop{\cap} F_m)\subseteq \mathop{\cup}\limits^{\infty}_{n=1}(U_n\mathop{\cap} F_n)=\O.
$$

\noindent Therefore the sequence $(\overline{U_n})^{\infty}_{n=1}$ is locally finite. Thus, the sequence $(U_n)^{\infty}_{n=1}$ is locally finite too. But this contradicts to pseudo-$\aleph_0$-compactness of $X$. Hence, $U_0$ is of second category in $X$. Thus, $X$ is a Baire space.
\end{proof}

Recall that for every function $f:X\to \mathbb R$ the set ${\rm supp}\,f = \{x\in
X: f(x)\ne 0\}$ is called {\it the support of function $f$}.

Let $P_{\aleph}=\{x\in [0,1]^{\aleph^+}: |{\rm supp}\,x|\leq \aleph\}$, that is $P_{\aleph}$ is the subspace of Tikhonoff cube with the weight $\aleph^+$, which consists of all functions with the support of the cardinality $<\aleph^+$.

\begin{proposition}\label{p:4.2}

$a$) $P_{\aleph}$ \,\,has (II$_{\aleph_0}$);

$b$) $P_{\aleph}$ \,\, does not have (III$_{\aleph}$);

$c$) $P_{\aleph}$ \,\, has (III$_{\aleph'}$) for every infinite cardinal $\aleph'\ne\aleph$;

$d$) $P_{\aleph}$ \,\, is $\aleph$-compact.
\end{proposition}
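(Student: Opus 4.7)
The plan treats (a)--(d) with partly overlapping ideas; throughout I rely on the fact that every basic open set $V$ in $P_\aleph$ has finite support $R(V)\subseteq\aleph^+$, and that a point $y\in[0,1]^{\aleph^+}$ lies in $P_\aleph$ exactly when $|{\rm supp}\,y|\le\aleph$.

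For (a), after refining to basic open sets and assuming $|I|>\aleph_0$, apply Shanin's Lemma (Proposition~\ref{p:2.3}) at the cardinal $\aleph_0$ to the finite sets $R(V_i)$ to extract an uncountable $\Delta$-system $J\subseteq I$ with finite root $B$. Since $[0,1]^B$ is separable metric, pigeonhole on a countable dense set produces a point $d\in[0,1]^B$ lying in uncountably many traces $V_i|_B$, say for $i\in J'$. For each $i\in J'$ pick $y_i\in V_i$ with $y_i|_B=d$, fix any countably infinite $J''\subseteq J'$, and define $y$ by $y|_B=d$, $y|_{R(V_i)\setminus B}=y_i|_{R(V_i)\setminus B}$ for $i\in J''$, and $y=0$ elsewhere; this is well-defined since the residuals $R(V_i)\setminus B$ are pairwise disjoint in $J$, and $y$ has countable support so lies in $P_\aleph$, with $y\in V_i$ for every $i\in J''$, contradicting pointwise finiteness. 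Part (c) in the regime $\aleph_0\le\aleph'<\aleph$ reuses exactly this template with Shanin applied at the cardinal $\aleph'$, except that one selects $|J''|=\aleph'^+$, which is $\le\aleph$ because $\aleph'<\aleph$; then $y$ still lies in $P_\aleph$ but witnesses more than $\aleph'$ sets of the family.

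Part (b) is witnessed directly: the sets $U_\alpha=\{x\in P_\aleph:x(\alpha)>1/2\}$ for $\alpha<\aleph^+$ form an open nonempty family of cardinality $\aleph^+>\aleph$, yet $\{\alpha:x\in U_\alpha\}\subseteq{\rm supp}\,x$ has cardinality $\le\aleph$. For (c) in the regime $\aleph'>\aleph$ the Shanin template fails, since any $y$ meeting more than $\aleph'$ sets of the family would force $|{\rm supp}\,y|>\aleph$; instead I pass to a weight count. The rational basic open sets of $[0,1]^{\aleph^+}$ (finite nontrivial support, coordinates drawn from a fixed countable base of $[0,1]$) number only $\aleph^+\cdot\aleph_0=\aleph^+\le\aleph'$, so shrinking each $V_i$ to such a subset $V_i'\subseteq V_i$ and pigeonholing on an index set of size $>\aleph'$ yields a single rational basic open set $V^*$ equal to $V_i'$ for more than $\aleph'$ indices $i$; any $y\in V^*$ then lies in more than $\aleph'$ of the original $V_i$, contradicting $\aleph'$-pointwise-ness.

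For (d), given $(C_i)_{i\in I}$ closed in $P_\aleph$ with $|I|\le\aleph$ and the finite intersection property, pick $x_F\in\bigcap_{i\in F}C_i$ for each finite $F\subseteq I$, and let $S=\bigcup_F{\rm supp}\,x_F$, of cardinality $\le\aleph$. In the compact space $[0,1]^S$ the net $\pi_S(x_F)$, directed by inclusion, has a cluster point $y^*$; extend by zero off $S$ to a point $y\in P_\aleph$. Because every $x_F$ vanishes off $S$, $y$ remains a cluster point of $(x_F)$ in $[0,1]^{\aleph^+}$ and hence in $P_\aleph$; for each fixed $i_0\in I$ the cofinal subnet $(x_F)_{F\ni i_0}$ lies entirely in the closed set $C_{i_0}$ and still clusters at $y$, forcing $y\in C_{i_0}$ and so $y\in\bigcap_i C_i$. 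The main obstacle I anticipate is (c) in the regime $\aleph'>\aleph$, where the Shanin/separability recipe used elsewhere overspends on support and must be replaced by the coarser weight-count argument above.
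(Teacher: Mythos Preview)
Your argument is correct and tracks the paper's proof closely; the differences are purely tactical. For (a) and (b) you do exactly what the paper does. For (c) with $\aleph'<\aleph$ the paper skips the $\Delta$-system step entirely: it sets $B=\bigcup_{i\in I}R(V_i)$, notes $|B|\le(\aleph')^+\le\aleph$, and applies (III$_{\aleph'}$) of $[0,1]^B$ directly to get a point that, extended by zero, lies in $P_\aleph$; your Shanin-plus-pigeonhole route works but is a detour. For (c) with $\aleph'>\aleph$ the paper first pigeonholes on the finite support $A_0=R(V_i)$ (there are only $\aleph^+\le\aleph'$ such finite sets) and then invokes (III$_{\aleph'}$) of the finite cube $[0,1]^{A_0}$, whereas your single pigeonhole on the $\aleph^+$ rational basic open sets collapses these two steps into one --- a mild streamlining (just remember to choose the witnessing $y\in V^*\cap P_\aleph$, e.g.\ with finite support). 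For (d) the paper simply asserts that the $P_\aleph$-closure of any set of cardinality $\le\aleph$ is compact; your explicit net argument is a faithful unpacking of that fact, and your observation that $F\supseteq F_0\ni i_0$ forces $i_0\in F$ is exactly what makes the cofinal-subnet step go through.
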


\begin{proof} Let $S$ be a set such that $|S|=\aleph^+$ and $P_{\aleph}\subseteq [0,1]^S$.

$a$). Let ${\mathcal U}=(U_i:i\in I)$ be a family of basic open nonempty sets $U_i$ in $P_{\aleph}$ and $|I|>\aleph_0$. For every $i\in I$ we denote by $\tilde{U_i}$ such basic open nonempty set in $[0,1]^S$ such that $U_i=\tilde{U_i}\bigcap P_{\aleph}$. We consider the family $(B_i: i\in I)$, where $B_i=R(\tilde{U_i})$. According to Proposition \ref{p:2.3}, there exist a finite set $B\subseteq S$ and an uncountable set $I_1\subseteq I$ such that $B_i\bigcap B_j=B$ for every distinct $i,j\in I_1$. The space $[0,1]^B$ has (II$_{\aleph_0}$). Therefore the family $(V_i:i\in I_1)$, where $V_i=\tilde{U_i}{|_{B}}$, is not pointwise finite. Thus, there exist $x_0\in [0,1]^B$ and countable set $I_0\subseteq I_1$ such that $x_0\in V_i$ foe every $i\in I_0$. For every $i\in I_0$ choose a point $x_i\in \tilde{U_i}$ and consider the function $x:S\to [0,1]$, which defined by:
$$
 x(s) = \left \{\begin{array}{rr}
 x_0(s),
&
 {\rm if}\quad s\in B;
\\
  x_i(s),
&
 {\rm if}\quad s\in B_i\setminus B,\,\, i\in I_0;
\\
  0,
&
 {\rm if}\quad s\in S\setminus (\bigcup\limits_{i\in I_0}B_i).
  \end{array} \right .
$$
Since $B_i\bigcap B_j=B$, $(B_i\setminus B)\bigcap (B_j\setminus B)=\O$ for arbitrary distinct $i,j\in I_0$. Therefore the function $x$ is defined correctly. Moreover, ${\rm supp}\,x \subseteq \bigcup\limits_{i\in I_0}B_i$ and $|\bigcup\limits_{i\in I_0}B_i|\leq |I_0|\cdot \aleph_0 \leq \aleph$. Thus, $|{\rm supp}\,x|\leq \aleph$. Hence, $x\in P_{\aleph}$. Note that $x{|_B}=x_0\in V_i={\tilde{U}_i}{|_B}$ and $x{|_{B_i\setminus B}} = {x_i}{|_{B_i\setminus B}}\in {\tilde{U_i}}{|_{B_i\setminus
B}}$. Therefore, $x{|_{B_i}}\in {\tilde{U_i}}{|_{B_i}}$, that is $x\in \tilde{U_i}$ for every $i\in I_0$. Thus, $x\in U_i$ for every $i\in I_0$ and the family ${\mathcal U}$ is not pointwise finite. Thus, $P_{\aleph}$ has (II$_{\aleph_0}$).

$b$). For every $s\in S$ put $U_s=\{x\in P_{\aleph}: x(s)>0\}$. Clearly that the family $(U_s:s\in S)$ is $\aleph$-pointwise in $P_{\aleph}$, besides $|S|>\aleph$. Thus, $P_{\aleph}$ does not have (III$_{\aleph}$).

$c$). Let $\aleph'$ be an infinite cardinal which not equals to $\aleph$ and ${\mathcal U}=(U_i:i\in I)$ be a family of nonempty open basic sets in $P_{\aleph}$, besides $|I|={\aleph'}^+$. For every $i\in I$ we choose a basic open set $\tilde{U_i}$ in $[0,1]^S$ such that $U_i=\tilde{U_i}\bigcap P_{\aleph}$ and put $B_i=R(\tilde{U_i})$.

Firstly we consider the case of $\aleph'<\aleph$. Put $B=\bigcup\limits_{i\in I}B_i$. The space $[0,1]^B$ has (III$_{\aleph'}$), therefore the family $(V_i:i\in I)$, where $V_i={\tilde{U_i}}{|_B}$, is not $\aleph'$-pointwise, that is there exist $x\in [0,1]^B$ and $I_1\subseteq I$ such that $|I_1|>\aleph'$ and $x\in V_i$ for every $i\in I_1$. Define a point $y\in [0,1]^S$ by:
$$
 y(s) = \left \{\begin{array}{rr}
 x(s),
&
 {\rm if}\quad s\in B;
\\
   0,
&
 {\rm if}\quad s\in S\setminus B.
  \end{array} \right .
$$
Since ${\rm supp}\,y \subseteq B$ and $|B|\leq |I|\cdot \aleph_0 \leq {\aleph'}^+\cdot \aleph_0\leq \aleph$, $y\in P_{\aleph}$. Now it easy to see that $y\in U_i$ fpr every $i\in I_1$. Thus, ${\mathcal U}$ is not $\aleph'$-pointwise. Hence, $P_{\aleph}$ has (III$_{\aleph'}$).

Now let $\aleph'> \aleph^+$. We consider the system ${\mathcal A}$ of all finite subsets of set $S$. Clearly that $|{\mathcal A}|=|S|=\aleph^+$. For every $A\in {\mathcal A}$ we put $I_A=\{i\in I: B_i=A\}$.

Suppose that $|I_A|\leq \aleph'$ for every $A\in {\mathcal A}$. Then $|I|=|\bigcup\limits_{A\in {\mathcal A}}I_A|\leq |{\mathcal A}|\cdot \aleph'=\aleph^+\cdot
\aleph'\leq \aleph'$. But this contradicts to $|I|={\aleph'}^+$. Thus, there exist $A_0\in {\mathcal A}$ and $I_0\subseteq I$ such that$|I_0|>\aleph'$ and $B_i=A_0$ for every $i\in I_0$.

Taking into account that $[0,1]^{A_0}$ has (III$_{\aleph'}$) we obtain that there exist $x\in [0,1]^{A_0}$ and $I_1\subseteq I_0$ such that $|I_1|>\aleph'$ and $x\in {\tilde{U_i}}_{|_{A_0}}$. Then a point $y\in [0,1]^S$ which defined by:
$$
 y(s) = \left \{\begin{array}{rr}
 x(s),
&
 {\rm if}\quad s\in A_0;
\\
   0,
&
 {\rm if}\quad s\in S\setminus A_0,
  \end{array} \right .
$$
belongs to $P_{\aleph}$, besides $y\in U_i$ for every $i\in I_1$.

Thus,  ${\mathcal U}$ is not $\aleph'$-pointwise and $P_{\aleph}$ has (III$_{\aleph'}$).

$d$). This assertion follows from the next fact: the closure in $P_{\aleph}$ of any set with the cardinality $\leq\aleph$ is compact.
\end{proof}

This proposition implies that all properties (III$_{\aleph}$) are not comparable, although it easy to see that (I$_{\aleph}$) $\Longrightarrow$ (I$_{\aleph'}$) and (II$_{\aleph}$) $\Longrightarrow$ (II$_{\aleph'}$) for $\aleph<\aleph'$. Moreover, the discrete space with the cardinality $\aleph$ shows that the implication (III$_{\aleph}$) $\Longrightarrow$ (I$_{\aleph'}$) is not true for $\aleph>\aleph'$. The Aleksandroff compactification (see [2, p. 261]) of discrete space with the cardinality $\aleph^+$ show that the compactness does not imply any properties (II$_{\aleph}$) and (III$_{\aleph}$).

Thus, the properties which featured in the investigation of dependence on $\aleph$ coordinates of separately continuous functions can be represented as the following table. Note that the implications from Proposition \ref{p:4.1} are true for Hausdorff spaces and all non-marked implications are not true.
$$
  \begin{array}{ccccccccccccc}
  \phantom {n}^{\small{\mbox {countably}}}_{\small{\mbox {compact.}}}\!
&
 \Rightarrow\!
&
 \phantom {m}^{{\small{\mbox{ pseudo}}-\aleph_0-}}_{ \,\,\,\small{\mbox{compact.}}}\!
&
 \Rightarrow\!
&
(I_{\aleph_0})\!
&
\Rightarrow\!
&
(I_{\aleph_1})\!
&
\Rightarrow\!
&
\dots\!
&
\Rightarrow\!
&
(I_{\aleph})\!
&
\Rightarrow\!
&
\dots\!
\\
\Uparrow\!
&
&
\Downarrow\!
&
&
\Uparrow\!
&
&
\Uparrow\!
&
&
&
&
\Uparrow\!
&
&
\\
  \phantom {n}^{\small{\aleph-{\mbox{ compact.}}}}_{\,\,\,\,\,\,\small{\mbox{í³ñòü}}}\!
&
&
 {\mbox{ Baire}}\!
&
&
(II_{\aleph_0})\!
&
\Rightarrow\!
&
(II_{\aleph_1})\!
&
\Rightarrow\!
&
\dots\!
&
\Rightarrow\!
&
(II_{\aleph})\!
&
\Rightarrow\!
&
\dots\!
\\
\Uparrow\!
&
&
&
&
\Uparrow\!
&
&
\Uparrow\!
&
&
&
&
\Uparrow\!
&
&
\\
 \phantom {n}^{\small{\mbox{ compact.}}}_{\,\,\,\small{\mbox{í³ñòü}}}\!
&
&
&
&
(III_{\aleph_0})\!
&
&
(III_{\aleph_1})\!
&
&
\dots\!
&
&
(III_{\aleph})\!
&
&
\dots\!
\end{array}
$$

We will use the following auxiliary result in investigation of properties of the second multiplier.

\begin{proposition}\label{p:4.3}
Let $X$ be a topological space with $(II_{\aleph})$, ${\mathcal U}= (U_i:i\in I)$ be a family of open nonempty sets in $X$ with $|I|>\aleph$. Then there exists $x_0\in X$ such that for every neighborhood $U$ of $x_0$ in $X$ the set $\{i\in I: U_i\bigcap U\ne \O\}$ has the cardinality $>\aleph$.
\end{proposition}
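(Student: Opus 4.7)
The strategy is to argue by contradiction and mirror the recursion from the proof of Proposition \ref{p:3.1}. Assume no $x_0$ with the stated property exists; then for every $x \in X$ I may fix an open neighborhood $U(x)$ of $x$ with $|J(x)| \leq \aleph$, where $J(x) = \{i \in I : U_i \cap U(x) \ne \emptyset\}$. The aim is to manufacture from this data a family of $|I|$ many pairwise disjoint nonempty open subsets of $X$; since pairwise disjointness is a particularly strong form of pointwise finiteness, this will contradict $(II_{\aleph})$ and finish the proof.

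Let $\beta$ be the initial ordinal of cardinality $|I|$, and index $\mathcal{U} = (U_\alpha : \alpha < \beta)$. By transfinite recursion on $\alpha < \beta$, I construct open nonempty sets $V_\alpha \subseteq X$ together with indices $\gamma(\alpha) \in I$ so that $V_\alpha \subseteq U_{\gamma(\alpha)}$ and the trace set $A_\alpha := \{i \in I : V_\alpha \cap U_i \ne \emptyset\}$ has cardinality $\leq \aleph$. At stage $\alpha$, the union $\bigcup_{\xi < \alpha} A_\xi$ has cardinality at most $|\alpha| \cdot \aleph$, which is strictly smaller than $|I|$ because both $|\alpha|$ and $\aleph$ are less than $|I|$. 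Hence some $\gamma(\alpha) \in I \setminus \bigcup_{\xi < \alpha} A_\xi$ may be chosen; by definition of $A_\xi$ this forces $U_{\gamma(\alpha)} \cap V_\xi = \emptyset$ for every $\xi < \alpha$. Pick $x \in U_{\gamma(\alpha)}$ and set $V_\alpha := U(x) \cap U_{\gamma(\alpha)}$: this is open, nonempty (it contains $x$), contained in $U_{\gamma(\alpha)}$ (so disjoint from every earlier $V_\xi$), and satisfies $A_\alpha \subseteq J(x)$, so $|A_\alpha| \leq \aleph$ as required.

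After $\beta$ stages, the family $(V_\alpha : \alpha < \beta)$ consists of pairwise disjoint nonempty open subsets of $X$ with $|\{V_\alpha : \alpha < \beta\}| = |I| > \aleph$. Every point of $X$ lies in at most one $V_\alpha$, so this family is pointwise finite; this directly contradicts $(II_{\aleph})$. The only nontrivial point to verify is the cardinal arithmetic that keeps the recursion alive, i.e.\ that $|\alpha| \cdot \aleph < |I|$ whenever $\alpha < \beta$, which reduces to the identity $\lambda \cdot \mu = \max(\lambda, \mu)$ for infinite cardinals applied to two cardinals that are both strictly less than $|I|$. The packaging of the ``locally $\aleph$'' hypothesis into the trace sets $A_\xi$ is the exact analogue of how local finiteness is packaged in the proof of Proposition \ref{p:3.1}, so no further topological work is required.
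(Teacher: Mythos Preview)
Your proof is correct and follows essentially the same route as the paper: assume the conclusion fails, fix for each $x$ a neighborhood with trace set of size $\le\aleph$, and build by transfinite recursion of length $|I|$ a pairwise disjoint family of nonempty open sets (each $V_\alpha$ obtained as the intersection of a fresh $U_{\gamma(\alpha)}$ with the chosen neighborhood of a point in it), contradicting $(II_\aleph)$. Your handling of the cardinal arithmetic $|\alpha|\cdot\aleph<|I|$ is in fact cleaner than the paper's, which writes $|\gamma|\cdot\aleph\le\aleph^2=\aleph$ and so tacitly treats only the case $|I|=\aleph^+$.
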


\begin{proof} Suppose the contrary. That is for every $x\in X$ there exists an open neighborhood $V_x$ of $x$ such that the cardinality of the set $\{i\in I: U_i\bigcap V_x\ne \O\}$ is $\leq\aleph$.

Using transfinite induction we construct a family $(G_{\alpha}: \alpha <\beta)$, where $\beta$ is the first ordinal of the cardinality $|I|$, of open nonempty pairwise disjoint sets $G_{\alpha}$. This contradicts to (II$_{\aleph}$) of $X$.

We take $i_1\in I$ and $x_1\in U_{i_1}$ and put $G_1=V_{x_1}\bigcap U_{i_1}$. Clearly that the set $I_1=\{i\in I: U_i\bigcap G_1\ne \O\}$ has the cardinality  is $\leq\aleph$.

Assume that the sets $G_{\alpha}$ for $\alpha < \gamma < \beta$ are constructed, moreover all sets $I_{\alpha}=\{i\in I: U_i\bigcap G_{\alpha}\ne \O\}$ have the cardinality is $\leq\aleph$. Note that $|\bigcup\limits_{\alpha < \gamma} I_{\alpha}|\leq |\gamma|\cdot \aleph \leq \aleph^2 = \aleph < |I|$. Therefore there exists $i_{\gamma}\in I$ such that $U_{i_{\gamma}}\bigcap G_{\alpha}=\O$ for every $\alpha < \gamma$. Take a point $x_{\gamma}\in U_{i_{\gamma}}$ and put $G_{\gamma} = U_{i_{\gamma}}\bigcap V_{x_{\gamma}}$. Clearly that the cardinality of $I_{\gamma}$ is $\leq\aleph$.
\end{proof}

Denote by $D_{\aleph}$ the discrete space of the cardinality $\aleph^+$. Let $Q_{\aleph}=D_{\aleph}\bigcup \{\infty\}$, besides neighborhoods of $\infty$ in
$Q_{\aleph}$ are all sets $A\bigcup\{\infty\}$, where $A\subseteq D_{\aleph}$ and $|D_{\aleph}\setminus A|\leq \aleph$.

\begin{proposition}\label{p:4.4}
$a$)\,\,\, $Q_{\aleph}$ has $(I_{\aleph})$;

$b$)\,\,\, every subfamily of $(\{d\}:d\in D_{\aleph})$ with the cardinality $\aleph$ is locally finite in $Q_{\aleph}$;

$c$)\,\,\, $P_{\aleph}\times Q_{\aleph}$ has $(I_{\aleph})$.
\end{proposition}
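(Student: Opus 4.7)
The plan is to prove the three parts in order. Parts (a) and (b) are direct verifications using the neighborhood structure of $Q_\aleph$, while (c) is the main work and rests on Proposition \ref{p:4.3} applied to $P_\aleph$.

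For part (a), let $\mathcal{U} = (U_\alpha)$ be a locally finite family of nonempty open sets in $Q_\aleph$. Local finiteness at $\infty$ produces a neighborhood $N = C \cup \{\infty\}$ with $|D_\aleph \setminus C| \leq \aleph$ meeting only finitely many $U_\alpha$; every other $U_\alpha$ is then a nonempty subset of $D_\aleph \setminus C$. Since each $d \in D_\aleph$ has the singleton $\{d\}$ as an open neighborhood meeting only finitely many $U_\alpha$, an incidence count bounds the number of remaining $U_\alpha$ by $|D_\aleph \setminus C| \cdot \aleph_0 \leq \aleph$, so $|\mathcal{U}| \leq \aleph$. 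For part (b), given $D' \subseteq D_\aleph$ with $|D'| = \aleph$, the set $(D_\aleph \setminus D') \cup \{\infty\}$ is a neighborhood of $\infty$ disjoint from every $\{d\}$, $d \in D'$, and each $d \in D_\aleph$ has $\{d\}$ as a neighborhood meeting at most one member of the subfamily.

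For part (c), suppose for contradiction $\mathcal{U} = (U_i)_{i \in I}$ is a locally finite family of nonempty basic open sets $U_i = V_i \times W_i$ in $P_\aleph \times Q_\aleph$ with $|I| > \aleph$. By Proposition \ref{p:4.2}(a), $P_\aleph$ has $(II_{\aleph_0})$, hence $(II_\aleph)$, so Proposition \ref{p:4.3} applies to $P_\aleph$. Split $I$ into $I_1 = \{i : \infty \in W_i\}$ and $I_2 = I \setminus I_1$; at least one has cardinality $> \aleph$. If $|I_1| > \aleph$, apply Proposition \ref{p:4.3} to $(V_i)_{i \in I_1}$ to obtain $x_0$ such that every neighborhood of $x_0$ meets $> \aleph$ of the $V_i$, $i \in I_1$. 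Since $\infty$ lies in every $W_i$ for $i \in I_1$ and in every neighborhood of $\infty$, any basic neighborhood $V \times W$ of $(x_0, \infty)$ meets $U_i$ whenever $V \cap V_i \neq \emptyset$, contradicting local finiteness at $(x_0, \infty)$.

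If instead $|I_2| > \aleph$, apply Proposition \ref{p:4.3} to $(V_i)_{i \in I_2}$ to obtain $x_0$, and use local finiteness at $(x_0, \infty)$ to pick a basic neighborhood $V \times W$ with $W = B \cup \{\infty\}$, $|D_\aleph \setminus B| \leq \aleph$, meeting only finitely many $U_i$. The set $K = \{i \in I_2 : V \cap V_i \neq \emptyset\}$ has $|K| > \aleph$, and all but finitely many $i \in K$ must satisfy $W_i \cap B = \emptyset$, i.e., $W_i \subseteq D_\aleph \setminus B$. Choosing $d_i \in W_i$ for each such $i$ and invoking the pigeonhole $\aleph \cdot \aleph = \aleph$ yields some $d_0 \in D_\aleph \setminus B$ lying in $W_i$ for an index set $K_0$ with $|K_0| > \aleph$. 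A second application of Proposition \ref{p:4.3} to $(V_i)_{i \in K_0}$ produces $x_1$ such that every neighborhood $V'$ of $x_1$ meets $> \aleph$ of the $V_i$, $i \in K_0$; since $d_0 \in W_i$ for all such $i$, the neighborhood $V' \times \{d_0\}$ of $(x_1, d_0)$ meets $> \aleph$ of the $U_i$, contradicting local finiteness at $(x_1, d_0)$. The main obstacle is precisely this second case: one must first transit through $(x_0, \infty)$ to extract the point $d_0 \in D_\aleph$ by pigeonhole, then re-apply Proposition \ref{p:4.3} to the subfamily indexed by $K_0$ to secure a second witness on the discrete slice $P_\aleph \times \{d_0\}$.
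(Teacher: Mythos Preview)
Your arguments for all three parts are correct. Parts (a) and (b) coincide with the paper's treatment. For part (c), however, you take a genuinely different route.

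The paper first shrinks each basic rectangle so that its $Q_\aleph$-factor is a singleton $\{d_i\}\subseteq D_\aleph$ (this is harmless since shrinking preserves local finiteness and every nonempty open set in $Q_\aleph$ contains a point of $D_\aleph$). It then argues directly, not by contradiction: for each $d\in D_\aleph$ the subfamily with second factor $\{d\}$ is locally finite in the slice $P_\aleph\times\{d\}$, hence has cardinality $\le\aleph_0$ by $(II_{\aleph_0})$ of $P_\aleph$; and the set $B$ of $d$'s that actually occur has cardinality $\le\aleph$, which is shown with a \emph{single} application of Proposition~\ref{p:4.3} at the point $(p,\infty)$. Multiplying gives $|I|\le\aleph_0\cdot\aleph=\aleph$.

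Your version instead keeps the original $W_i$'s, splits on whether $\infty\in W_i$, and in the harder case uses Proposition~\ref{p:4.3} twice with a pigeonhole step in between. This works, but the paper's reduction to singletons eliminates your case split entirely and needs only one invocation of Proposition~\ref{p:4.3}; it also yields the sharper intermediate bound $|I_d|\le\aleph_0$ on each slice, which your argument does not extract. On the other hand, your approach avoids the (easy but tacit) justification of the singleton reduction.
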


\begin{proof} $a$). Let ${\mathcal U} = (U_i: i\in I)$ be a locally finite family of nonempty open in $Q_{\aleph}$ sets $U_i$. We choose a neighborhood $U$ of $\infty$ such that the set $J=\{i\in I: U_i\bigcap U \ne \O\}$ is finite. The set $B=D_{\aleph}\setminus U$ has the cardinality $\leq\aleph$. Moreover, for every $b\in B$ the set $I_b=\{i\in I: b\in U_i\}$ is finite. Since $I=J\bigcup (\bigcup\limits_{b\in B} I_b)$, $|I|\leq \aleph_0 + \aleph\cdot \aleph_0 \leq
\aleph$.

$b$). It follows immediately from the definition of $D_{\aleph}$.

$c$). Let ${\mathcal W}=(W_i:i\in I)$ be a locally finite family of nonempty basic open sets $W_i=U_i\times V_i$ in $R=P_{\aleph}\times Q_{\aleph}$. We may assume that $V_i=\{d_i\}$, where $d_i\in D_{\aleph}$. For every $d\in D_{\aleph}$ we put $I_d=\{i\in I: V_i=\{d\}\}$. Clearly that the sets $I_d$ are pairwise disjoint. Since the family ${\mathcal W}$ is locally finite in $R$, for every $d\in D_{\aleph}$ the family $(U_i: i\in I_d)$ is locally finite in $P_{\aleph}$. Since $P_{\aleph}$ has (II$_{\aleph_0}$), $|I_d|\leq \aleph_0$ for every $d\in D_{\aleph}$.

Now we show that the set $B=\{d\in D_{\aleph}: I_d\ne \O\}$ has the cardinality $\leq\aleph$.

Suppose that $|B|>\aleph$. For every $b\in B$ we choose $i_b\in I_b$ and put $U_b=U_{i_b}$, $V_b=V_{i_b}$. We consider the family $(U_b:b\in B)$. Since $P_{\aleph}$ has (II$_{\aleph}$), according to Proposition \ref{p:4.3} there exists $p\in P_{\aleph}$ such that for every neighborhood $U$ of $p$ the set $B_U=\{b\in B: U\bigcap U_b \ne \O\}$ has the cardinality $>\aleph$. For every neighborhood $V$ of $\infty$ in $Q_{\aleph}$ we have $|D_{\aleph}\setminus V|\leq \aleph$. Therefore, $|B_U\setminus V|\leq \aleph$, thus, the set $B_0=B_U\bigcap V$ has the cardinality $>\aleph$. For every $b\in B_0$ we have $U_b\bigcap U \ne\O$ and $V_b=\{b\}\subseteq V$. Hence, $B_0\subseteq \{b\in B: (U\times V)(U_b\times V_b)\ne \O\}$ and $\{i_b: b\in B_0\}\subseteq \{i\in I: (U\times V)\bigcap W_i\ne \O\}=I'$. Since all indexes $i_b$ are distinct for $b\in B_0$, because $V_{i_b}=V_b=\{b\}$, the set $I'$ is infinite. Thus, the family  ${\mathcal W}$ is not locally finite in the point $(p,\infty)$,  a contradiction.

Thus, $|B|\leq \aleph$. Then, taking into account that $I\subseteq \bigcup\limits_{b\in B}I_b$ and $|I_b|\leq \aleph_0$ we obtain that $|I|\leq \aleph_0\cdot |B|\leq \aleph_0\cdot \aleph = \aleph$.

Thus, $R$ has (I$_{\aleph}$).
\end{proof}

\section{Essentiality of some sufficient conditions}

Firstly we generalize a construction of separately continuous functions from [3].

Families ${\mathcal U}=(U_i: i\in I)$ and ${\mathcal V}=(V_i: i\in I)$ of sets $U_i$ and $V_i$ in topological spaces $X$ and $Y$ respectively are called {\it concerted}, if for every $x\in X$ and $y\in Y$ the families ${\mathcal V}_x=(V_i: i\in I^x)$ and ${\mathcal U}_y=(U_i:i\in I_y)$, where $I^x=\{i\in I: x\in U_i\}$ and $I_y=\{i\in I: y\in V_i\}$, are locally finite in $Y$ and $X$ respectively. Clearly that pointwise finite families ${\mathcal U}$
and ${\mathcal V}$ are concerted.

\begin{theorem}\label{th:5.1}
Let $X$, $Y$ be topological spaces, $({\varphi}_i:i\in I)$, $({\psi}_i:i\in I)$ be families of continuous functions ${\varphi}_i:X\to \mathbb R$ and
${\psi}_i:Y\to \mathbb R$ such that the families ${\mathcal U} = ({\rm supp}\,{\varphi}_i: i\in I)$ and ${\mathcal V} = ({\rm supp}\,{\psi}_i: i\in I)$ are concerted, $(f_i:i\in I)$ be a family of separately continuous functions $f_i:X\times Y\to \mathbb R$. Then the function $f:X\times Y\to \mathbb R$, which defined by formula
$$
f(x,y)=\sum\limits_{i\in I}{\varphi}_i(x){\psi}_i(y)f_i(x,y),
$$
is separately continuous.
\end{theorem}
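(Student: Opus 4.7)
The plan is to reduce the defining sum to a finite sum on an appropriate neighborhood and then invoke the elementary fact that a finite sum of continuous functions is continuous. First I would observe that at any point $(x,y)\in X\times Y$ a term $\varphi_i(x)\psi_i(y)f_i(x,y)$ can be nonzero only when $i\in I^x\bigcap I_y$. Since the concerted hypothesis makes ${\mathcal V}_x=({\rm supp}\,\psi_i:i\in I^x)$ locally finite in $Y$, it is in particular pointwise finite, so $I^x\bigcap I_y$ is finite. Hence the series defining $f(x,y)$ reduces to a finite sum at every point, and $f$ is well-defined.

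Next I would prove continuity of $f(x_0,\cdot)$ for an arbitrary fixed $x_0\in X$. Since $\varphi_i(x_0)=0$ whenever $i\notin I^{x_0}$, one may write
$$
f(x_0,y)=\sum_{i\in I^{x_0}}\varphi_i(x_0)\psi_i(y)f_i(x_0,y)
$$
for every $y\in Y$. Given $y_0\in Y$, the local finiteness of ${\mathcal V}_{x_0}$ supplies an open neighborhood $W$ of $y_0$ such that the set $J=\{i\in I^{x_0}:W\bigcap {\rm supp}\,\psi_i\ne\O\}$ is finite. For $y\in W$ and $i\in I^{x_0}\setminus J$ the factor $\psi_i(y)$ vanishes, so on $W$ the sum collapses to
$$
f(x_0,y)=\sum_{i\in J}\varphi_i(x_0)\psi_i(y)f_i(x_0,y),
$$
a finite sum of functions continuous in $y$ (each $\psi_i$ is continuous on $Y$, and each $f_i$ is continuous with respect to the second variable, while $\varphi_i(x_0)$ is a constant). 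Thus $f(x_0,\cdot)$ is continuous at $y_0$, and as $y_0$ was arbitrary, on all of $Y$.

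The continuity of $f(\cdot,y_0)$ for every $y_0\in Y$ follows by the symmetric argument applied to ${\mathcal U}_{y_0}=({\rm supp}\,\varphi_i:i\in I_{y_0})$, which is locally finite in $X$ by the other half of the concerted hypothesis. The main---and essentially only---subtle point is that mere pointwise finiteness of $\mathcal U$ and $\mathcal V$ would not suffice: without a uniform neighborhood on which only finitely many summands are nonzero, the set of active indices could vary uncontrollably with $y$ and the finite-sum reduction would break down. The concerted hypothesis is tailored precisely to deliver this reduction at each point, so no further obstacle arises.
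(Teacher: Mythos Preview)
Your argument is correct and matches the paper's proof: restrict the sum to $I^{x_0}$, invoke the concerted hypothesis to get local finiteness of $(\mathrm{supp}\,\psi_i:i\in I^{x_0})$ in $Y$, and deduce continuity of $f(x_0,\cdot)$; the paper simply says ``a locally finite sum of continuous functions is continuous'' where you spell out the reduction to a finite sum on a neighborhood $W$, which is the same thing. One small slip in your closing commentary: pointwise finiteness of both $\mathcal U$ and $\mathcal V$ \emph{does} suffice---if $I^{x_0}$ is already finite then $\mathcal V_{x_0}$ is trivially locally finite---and the paper notes explicitly, just before the theorem, that pointwise finite families are concerted.
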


\begin{proof} Fix $x_0\in X$ and put $I_0=\{i\in I: x_0\in {\rm supp}\,{\varphi}_i\}$. Since the families ${\mathcal U}$ and ${\mathcal V}$ are concerted, the family $({\rm supp}\,{\psi}_i: i\in I_0)$ is locally finite on $Y$. Therefore the locally finite sum $\sum\limits_{i\in I_0}{\varphi}_i(x_0) {\psi}_i(y)f_i(x_0,y)$ of continuous on $Y$ functions ${\varphi}_i(x_0){\psi}_i(y)f_i(x_0,y)$ is continuous on $Y$. On the other hand, $f(x_0,y)=\sum\limits_{i\in I}{\varphi}_i(x_0){\psi}_i(y)f_i(x_0,y)$. Thus, the function $f^{x_0}$ is continuous on $Y$.

The continuity of the function $f$ with respect $y$ can be verify analogously.
\end{proof}

\begin{theorem}\label{th:5.2} Let $Z=[0,1]^S$, where $|S|=\aleph^+$, $X_1=P_{\aleph}$,
$Y_1=Q_{\aleph}\times Z$, $X_2=Q_{\aleph}$, $Y_2=P_{\aleph}\times Z$. Then there exists a function $f:P_{\aleph}\times Q_\aleph\times Z\to \mathbb R$ which separately continuous on $X_1\times Y_1$ and on $X_2\times Y_2$ and such that $S$ is a smallest set on which $f$ concentrated with respect to $z$.
\end{theorem}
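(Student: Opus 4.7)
The plan is to construct $f$ directly and explicitly, exploiting the fact that $|S|=|D_\aleph|=\aleph^+$. I would fix a bijection and identify $S$ with $D_\aleph$, so that each $d\in D_\aleph$ simultaneously indexes a coordinate of $z\in Z=[0,1]^S$ and a coordinate of $p\in P_\aleph\subseteq[0,1]^S$. Then define
$$
f(p,q,z)=\begin{cases} p(q)\,z(q), & q\in D_\aleph,\\ 0, & q=\infty.\end{cases}
$$
The guiding idea is that the support condition $|\mathrm{supp}\,p|\le\aleph$ built into $P_\aleph$ matches exactly the structure of neighborhoods of $\infty$ in $Q_\aleph$ (complements of cardinality $\le\aleph$), so that problematic terms near $q=\infty$ can be killed off in every continuity check.

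I would then verify the four separate-continuity requirements in turn. First, fixing $(q,z)$, the map $p\mapsto f(p,q,z)$ is either identically $0$ (if $q=\infty$) or the coordinate projection $p\mapsto p(q)$ times the scalar $z(q)$, hence continuous. Second, fixing $(p,z)$, the map $q\mapsto f(p,q,z)$ is continuous on $Q_\aleph$: every point of $D_\aleph$ is isolated, and at $\infty$ the set $(D_\aleph\setminus\mathrm{supp}\,p)\cup\{\infty\}$ is a neighborhood of $\infty$ on which $f$ vanishes, matching $f(p,\infty,z)=0$. Third, fixing $p$, I would check joint continuity of $(q,z)\mapsto f(p,q,z)$ on $Q_\aleph\times Z$: at a point $(d,z_0)$ with $d\in D_\aleph$ the clopen product $\{d\}\times Z$ reduces the function to the continuous map $(d,z)\mapsto p(d)z(d)$; at $(\infty,z_0)$ the neighborhood $((D_\aleph\setminus\mathrm{supp}\,p)\cup\{\infty\})\times Z$ again sends $f$ identically to $0$. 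Fourth, fixing $q$, the map $(p,z)\mapsto f(p,q,z)$ on $P_\aleph\times Z$ is either constant (if $q=\infty$) or the product $(p,z)\mapsto p(d)z(d)$ of two continuous coordinate projections, hence jointly continuous.

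To identify the smallest concentration set with respect to $z$, I would invoke Proposition~\ref{p:2.1} in the $z$-variable and produce, for each $s\in S$ identified with $d\in D_\aleph$, explicit witnesses. Take $p$ to be the indicator of $\{d\}$ in $[0,1]^S$ (so $p\in P_\aleph$ since $|\mathrm{supp}\,p|=1$), $q=d$, $z\equiv 0$, and $z'$ the indicator of $\{d\}$ in $Z$; then $z$ and $z'$ agree off $s$, but $f(p,q,z)=0\ne 1=f(p,q,z')$. Hence every $s\in S$ belongs to the smallest concentration set $S_0$, giving $S_0=S$. There is no real obstacle here beyond bookkeeping; the success of the construction hinges entirely on the cardinal invariants of $P_\aleph$ and $Q_\aleph$, which were tailored in Section~4 precisely so that this single one-line formula would satisfy all four separate-continuity conditions while still exhibiting a maximal dependence set in $z$.
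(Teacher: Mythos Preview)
Your construction is correct and in fact coincides with the paper's: the sum $f(p,q,z)=\sum_{s\in S}\varphi_s(p)\psi_s(q)f_s(z)$ with $\varphi_s(p)=p(s)$, $\psi_s(q)=[q=s]$, $f_s(z)=z(s)$ collapses to exactly your closed form $f(p,q,z)=p(q)z(q)$ for $q\in D_\aleph$ and $0$ at $\infty$, and your witnesses for $S_0=S$ are the same as the paper's. The only difference is packaging: the paper verifies separate continuity on $X_1\times Y_1$ and $X_2\times Y_2$ by checking once that the support families $(\mathrm{supp}\,\varphi_s)$ and $(\mathrm{supp}\,\psi_s)$ are concerted and then invoking Theorem~\ref{th:5.1}, whereas you bypass that lemma and verify the four continuity conditions directly; your argument is slightly more elementary, while the paper's illustrates the general mechanism of Theorem~\ref{th:5.1}.
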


\begin{proof} Let $P_\aleph = \{x\in [0,1]^S: |{\rm supp}\,x|\leq \aleph\}$ and $Q_\aleph=S\bigcup \{\infty\}$. We consider the families $(\varphi_s: s\in S)$, $(\psi_s: s\in S)$ and $(f_s: s\in S)$ of continuous functions $\varphi_s: P_\aleph \to \mathbb R$, $\varphi_s(p)=p(s)$, $\psi_s:Q_\aleph \to \mathbb R$, $\psi_s(q)=
\left\{\begin{array}{ccc} 0, \,\,\,{\rm if}\,\,\,q\ne s;\\ 1,\,\,\,{\rm if}\,\,\,
q=s,
\end{array}\right.$ and $f_s:Z\to \mathbb R$, $f_s(z)=z(s)$. It follows from the definition of $P_\aleph$ that the family $({\rm supp}\,\varphi_s: s\in S)$ is $\aleph$-pointwise. According to Proposition \ref{p:4.4}, every subfamily of the family $({\rm supp}\,\psi_s: s\in S)$ with the cardinality $\aleph$ is locally finite. Therefore for every $p\in P_\aleph$ the family $({\rm supp}\,\psi_s: p\in {\rm supp}\,\varphi_s)$ is locally finite in $Q_\aleph$. On other hand, the family $({\rm supp}\,\psi_s: s\in S)$ is pointwise finite on $Q_\aleph$. Thus, the families $({\rm supp}\,\varphi_s: s\in S)$ and $({\rm supp}\,\psi_s: s\in S)$ are concerted.

Acording to Theorem\ref{th:5.1}, the function
$$
f(p,q,z)=\sum\limits_{s\in S}\varphi_s(p)\psi_s(q)f_s(z)
$$
is separately continuous on $X_1\times Y_1$ and $X_2\times Y_2$.

For every $s\in S$ we denote by $p_s$ the characteristic function of the set $\{s\}$ on $S$. Moreover, let $z_0\equiv 0$ on $S$. Then $f(p_s,s,p_s)=1$ and $f(p_s,s,z_0)=0$, besides ${p_s}{|_{S\setminus \{s\}}}={z_0}{|_{S\setminus \{s\}}}$. Thus, according to Proposition \ref{p:2.1}, $S$ is a smallest set on which $f$ concentrated with respect to $z$.
\end{proof}

\begin{remark}
Note that $X_1$ is $\aleph$-compact with (II$_{\aleph_0}$) (Proposition \ref{p:4.2}) and $Y_1$ and $X_1\times Y_1$ have (I$_{\aleph}$). Therefore the function from Theorem \ref{th:5.2} shows that in the sufficient condition $(i)$ the property (III$_{\aleph}$) of $X$ can not be replaced by (II$_{\aleph_0}$) with $\aleph$-compactness; and in the sufficient condition $(ii)$ the pseudo-$\aleph_0$-compactness of $Y$ can not be replaced by (I$_{\aleph}$). On other hand, Baire space $X_2$ has (I$_{\aleph}$) and $Y_2$ is $\aleph$-compact with (II$_{\aleph_0}$). Therefore the countable compactness of $X$ in the sufficient condition $(iii)$ can not be replaced by (I$_{\aleph}$); and the property (III$_{\aleph}$) of $Y$ in Theorem \ref{th:2.5} can not be replaced by (II$_{\aleph_0}$).
\end{remark}

\bibliographystyle{amsplain}

\begin{thebibliography}{10}

\bibitem{NU} Noble N., Ulmer M. {\it Factoring functions on Cartesian products} Trans. Amer. Math. Soc., {\bf 163}, 329-339(1972).

\bibitem {E} Engelking R. {\it General topology} M. Mir, 1986 (in Russian).

\bibitem{M} Mykhaylyuk V.V. {\it Dependence of $\bf n$ coordinates of separately continuous functions on the products of compacts}, Ukr. Mat. Zhurn., {\bf 50}, N 6 (1998), 822-829 (in Ukrainian).

\bibitem {MM2} Maslyuchenko V.K., Mykhaylyuk V.V. {\it Separately continuous functions on products of compacts and their dependence on $\bf n$ variables} Ukr. Mat. Zhurn., {\bf 47}, N 3 (1995), 344-350 (in Ukrainian).

\bibitem{B} Bouziad A. Notes sur la
propri\'et\'e de Namioka// Trans. Amer. Math. Soc.-- 1994.--{\bf 344}, N2.--p.873-883.

\end{thebibliography}

\end{document}